\newtheorem{thm}{Theorem}[section]
\newtheorem{cor}[thm]{Corollary}
\newtheorem{lem}[thm]{Lemma}
\newtheorem{prop}[thm]{Proposition}
\theoremstyle{definition}
\newtheorem{defn}[thm]{Definition}
\newtheorem*{rem*}{Remark}
\newtheorem*{thmA}{Theorem A}
\newtheorem*{thmB}{Theorem B}
\newtheorem*{thmC}{Theorem C}
\newtheorem*{corD}{Corollary D}
\numberwithin{equation}{section}
\definecolor{OrangeRed}{cmyk}{0,0.6,1,0}            
\definecolor{DarkBlue}{cmyk}{1,1,0,0.20}
\definecolor{DarkGreen}{cmyk}{1,0,0.6,0.2}
\definecolor{myblue}{rgb}{0.66,0.78,1.00}
\definecolor{Violet}{cmyk}{0.79,0.88,0,0}
\definecolor{Lavender}{cmyk}{0,0.48,0,0}
\renewcommand{\Im}{\operatorname{Im}}
\renewcommand{\Re}{\operatorname{Re}}
\newcommand{\C}{{\mathbb C}}
\newcommand{\R}{{\mathbb R}}
\renewcommand{\epsilon}{\varepsilon}
\renewcommand{\phi}{\varphi}
\title{Dynamics inside Parabolic Basins}
\author{   
Mi Hu \\
\small Department of Mathematical, Physical and Computer Sciences,\\ 
\small University of Parma\\ 
\small Parma, 43124, Italy}
\begin{document}

    \maketitle  
\begin{abstract}
 In this paper, we investigate the behavior of orbits inside parabolic basins. 		Let $f(z)=z+az^{m+1}+(\text{higher terms}), m\geq1, a\neq0,$ and $\Omega_j$ be the immediate basin of $\mathcal{A}_j.$ We choose an arbitrary constant $C>0$ and a point $q\in{\bf v_j}\cap\mathcal{P}_j$. Then there exists a point $z_0\in \Omega_j$ so that for any $\tilde{q}\in Q:= (\cup_{l=0}^{\infty}f^{-l}(f^k(q)))\cap\Omega_j~ (l, k$ are non-negative integers), the Kobayashi distance $d_{\Omega_j}(z_0, \tilde{q})\geq C$, where $d_{\Omega_j}$ is the Kobayashi metric. In a previous paper \cite{RefH}, we showed that this result is not valid for attracting basins.
\end{abstract}

\section{Introduction}\label{sec1}

Let $\hat{\mathbb{C}}=\mathbb{C}\cup\{\infty\}, f: \hat{\mathbb{C}} \rightarrow \hat{\mathbb{C}}$ be a nonconstant holomorphic map, and $f^{ n}: \hat{\mathbb{C}} \rightarrow \hat{\mathbb{C}}$ be its $n$-fold iterate. In complex dynamics  \cite{RefB}, \cite{RefCG}, \cite{RefDH}, and \cite{RefM}, there are two crucial disjoint invariant sets, the {\sl Julia set}, and the {\sl Fatou set} of $f$, which partition the sphere $\hat{\mathbb{C}}$.
The Fatou set of $f$ is defined as the largest open set $\mathcal{F}$ where the family of iterates is locally normal. In other words, for any point $z\in \mathcal{F}$, there exists some neighborhood $U$ of $z$ so that the sequence of iterates of the map restricted to $U$ forms a normal family, so the orbits of iteration are well-behaved. 
The complement of the Fatou set is called the Julia set.

For $z\in \hat{\mathbb{C}}$, we call the set $\{z_n\}=\{z_1=f(z_0), z_2=f^2(z_0), \cdots\}$ the orbit of the point $z=z_0$. 
If $z_N=z_0$ for some integer $N$, we say that $z_0$ is a periodic point of $f$.
If $N=1$, then $z_0$ is a fixed point of $f.$ 

There have been many studies about probability measures that can describe the dynamics on the Julia set.
For example, if $z$ is any non-exceptional point,  the inverse orbits $\{f^{-n}(z)\}$ equidistribute toward the Green measure $\mu$ which lives on the Julia set. This was proved already by Brolin \cite{RefBrolin} in 1965, and many improvements and generalizations have been made \cite{RefDO}, \cite{RefDS}. However, this equidistribution toward $\mu$ is in the weak sense, and hence it is with respect to the Euclidean metric. Therefore, it is a reasonable question to ask how dense $\{f^{-n}(z)\}$ is in $\mathcal{F}$ near the boundary of $\mathcal{F}$ if we use finer metrics, for instance, the Kobayashi metric. The Kobayashi metric is an important tool in complex dynamics, see examples in \cite{RefAM}, \cite{RefAR}, \cite{RefBracci}, and \cite{RefBRS}.

There are some classical results about the behavior of a rational function on the Fatou set $\mathcal{F}$ as well. The connected components of the Fatou set of $f$ are called Fatou components. A Fatou component $\Omega\subset\hat{\mathbb{C}}$ of $f$ is invariant if $f(\Omega) = \Omega$. At the beginning of the $20$th century, Fatou \cite{RefFP} classified all possible invariant Fatou components of rational functions on the Riemann sphere. He proved that only three cases can occur: attracting, parabolic, and rotation. And in the '80s, Sullivan \cite{RefSD} completed the classification of Fatou components. He proved that every Fatou component of a rational map is eventually periodic, i.e., there are $n,m \in \mathbb{N}$ such that $f^{n+m}(\Omega)=f^m(\Omega)$. 
Furthermore, there are many investigations about the boundaries of the connected Fatou components. In a recent paper by Roesch and Yin \cite{RefRY}, they proved that if $f$ is a polynomial, any bounded Fatou component, which is not a Siegel disk, is a Jordan domain. 




In the paper \cite{RefH}, we investigated the behavior of orbits inside attracting basins, especially near the boundary, and obtained the following theorem:
\begin{thm}\label{the1}
	Suppose $f(z)$ is a polynomial of degree $N\geq 2$ on $\mathbb{C}$, $p$ is an attracting fixed point of $f(z),$ $\Omega_1$ is the immediate basin of attraction of $p$, $\{f^{-1}(p)\}\cap \Omega_1\neq\{p\}$,
	and $\mathcal{A}(p)$ is the basin of attraction of $p$, $\Omega_i (i=1, 2, \cdots)$ are the connected components of $\mathcal{A}(p)$. Then there is a constant $\tilde{C}$ so that for every point $z_0$ inside any $\Omega_i$, there exists a point $q\in \cup_k f^{-k}(p)$ inside $\Omega_i$ such that $d_{\Omega_i}(z_0, q)\leq \tilde{C}$, where $d_{\Omega_i}$ is the Kobayashi distance on $\Omega_i.$  
\end{thm}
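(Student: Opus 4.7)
The plan is to establish the bound for the immediate basin $\Omega_1$ first and then transfer it to every other component by dynamics. For $i \neq 1$, take the smallest $k_i$ with $f^{k_i}(\Omega_i) = \Omega_1$; given $z_0 \in \Omega_i$, set $w_0 := f^{k_i}(z_0) \in \Omega_1$, apply the $\Omega_1$-result to produce $q' \in \bigcup_k f^{-k}(p) \cap \Omega_1$ with $d_{\Omega_1}(w_0, q') \leq \tilde C_1$, and lift a near-realizing path through the proper holomorphic map $f^{k_i} : \Omega_i \to \Omega_1$ (avoiding its finite critical set) to obtain a point $q \in f^{-k_i}(q')$; a Koebe-type distortion estimate bounds the Kobayashi length of the lift, yielding $d_{\Omega_i}(z_0, q) \leq \tilde C$.

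For $\Omega_1$, I would use Koenigs linearization (attracting case) or B\"ottcher coordinates (superattracting case) to obtain a topological disk $V \ni p$ with $\overline V \Subset \Omega_1$ and $f(\overline V) \subset V$; then $K := \diam_{d_{\Omega_1}}(V) < \infty$. Given $z_0 \in \Omega_1$, the orbit $z_k := f^k(z_0)$ converges to $p$, so there is a least $N$ with $z_N \in V$. I would then pull $p$ back along inverse branches chosen to follow the forward orbit: at each step $k = N, N-1, \dots, 1$, take the univalent branch $g_k$ of $f^{-1}$ defined near $z_k$ with $g_k(z_k) = z_{k-1}$ on the maximal simply connected domain containing $z_k$ and disjoint from the critical values of $f$. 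Setting $g := g_1 \circ g_2 \circ \cdots \circ g_N$, which is a univalent map from a common domain $U$ containing $z_N$, we obtain $q := g(p) \in f^{-N}(p) \cap \Omega_1$, and the Schwarz--Pick inequality gives $d_{\Omega_1}(z_0, q) = d_{\Omega_1}(g(z_N), g(p)) \leq d_U(z_N, p)$.

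The hard part is to bound $d_U(z_N, p)$ uniformly in $z_0$ and $N$. Two obstacles have to be confronted: first, one must ensure $p$ actually lies in $U$, which requires choosing the $g_k$'s on domains large enough that $p$ remains in reach after composition; second, as $N$ grows the critical values of $f^N$ accumulate at $p$ (because the critical orbit in $\Omega_1$ converges to $p$ --- a feature of attracting basins that fails in the parabolic setting of the present paper, which is precisely why the present paper proves the opposite conclusion), so $U$ develops punctures clustering near $p$, which threatens to blow up its hyperbolic metric there. The resolution is a Koebe-type distortion argument applied to the univalent composition $g$: comparing the Euclidean size of $g(V')$ with its Euclidean distance to $\partial \Omega_1$ for a slightly indented Koenigs neighborhood $V' \subset V$ yields a $d_{\Omega_1}$-diameter bound depending only on $f$ and $V'$. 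The hypothesis $f^{-1}(p) \cap \Omega_1 \neq \{p\}$ enters essentially here: it is equivalent to the local degree of $f|_{\Omega_1}$ at $p$ being strictly less than $\deg f|_{\Omega_1}$, so that preimages $f^{-n}(V)$ genuinely split into more than one component at every level (rather than remaining a single expanding disk, as for $f(z) = z^d$ on $\D$), and this splitting is what makes the distortion estimate uniform.
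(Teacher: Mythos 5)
The paper under review does not itself prove Theorem~\ref{the1}; the statement is only recalled from \cite{RefH} as background, and Section~3 proves only Theorems A, B, C and Corollary D, so there is no internal proof to compare your proposal against. Judged on its own merits, your outline has the right architecture (a Koenigs absorbing disk $V$ with $f(\overline V)\subset V$ and finite Kobayashi diameter, entry time $N$, pull-back of $p$ along the orbit, distortion control for the inverse branch), but the decisive step is left open. You correctly flag that one must bound $d_U(z_N,p)$ uniformly in $z_0$ and $N$, and that the obstruction is the accumulation of the postcritical set at $p$; however, the proposed ``Koebe-type distortion argument'' presupposes that the branch $g$ is univalent on a fixed $V'$ containing \emph{both} $z_N$ and $p$, which is exactly what that accumulation threatens. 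Your closing claim --- that the hypothesis $f^{-1}(p)\cap\Omega_1\neq\{p\}$ makes $f^{-n}(V)$ ``genuinely split'' and that ``this splitting is what makes the distortion estimate uniform'' --- names the right hypothesis but supplies no mechanism: the branch of $f^{-N}$ is pinned down by the inverse orbit of $z_0$ with no freedom left, and splitting of $f^{-n}(V)$ into many components does not by itself keep $p$ inside the domain of univalence of that particular branch. At minimum you would need to show that when $p\notin U$, some other point of $\bigcup_k f^{-k}(p)$ lies in $g(V')$ at controlled $d_{\Omega_1}$-distance from $z_0$, or else pull back a different target than $p$; this is where the real work lies and it is not present.

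The transfer to the other components $\Omega_i$ has a second gap. You propose to lift a near-geodesic through $f^{k_i}:\Omega_i\to\Omega_1$, but Schwarz--Pick gives $L_{\Omega_1}(f^{k_i}\circ\tilde\gamma)\leq L_{\Omega_i}(\tilde\gamma)$, i.e.\ lifting can only \emph{increase} Kobayashi length --- the wrong direction for the upper bound you need. Lifted length is controlled (indeed preserved) only when $f^{k_i}|_{\Omega_i}$ is an unbranched covering, in which case the hyperbolic metric of $\Omega_i$ is exactly the pullback of that of $\Omega_1$; when $\Omega_i$ contains critical points of some iterate of $f$, which is not excluded in general, this identification fails and ``Koebe-type distortion'' on the lift is not automatic. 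You would need either to argue that the relevant branch points can be avoided at bounded additional cost, or to bypass the reduction to $\Omega_1$ altogether by running the pull-back argument directly along the concatenated orbit $z_0\mapsto f(z_0)\mapsto\cdots\mapsto f^{k_i}(z_0)\mapsto\cdots\mapsto z_N\in V$, which subsumes the transfer step into the same (still unresolved) distortion estimate.
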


For the case when $\{f^{-1}(p)\}\cap\Omega_1=\{p\}$, we proved a suitably modified version of Theorem \ref{the1} in our paper \cite{RefH}. In conclusion, we proved that there is a constant $C$ and a point $p'~ (p'=p \text{ or $p'$ is very close to $p$})$ so that for every point $z_0$ inside any $\Omega_i$, there exists a point $q\in \cup_k f^{-k}(p')$ inside $\Omega_i$ such that $d_{\Omega_i}(z_0, q)\leq C$, where $d_{\Omega_i}$ is the Kobayashi distance on $\Omega_i.$  This Theorem \ref{the1} essentially shows the shadowing of any arbitrary orbit by an orbit of one preimage of the fixed point $p.$

It is a natural question to ask if the same statement holds for all parabolic basins. 
Our main results as following in this paper show that theorem \ref{the1} in paper \cite{RefH} is no longer valid for parabolic basins:
\begin{thmA}\label{the2}
	Let $f(z)=z+z^2$. We choose an arbitrary constant $C>0$ and the point $q=-\frac{1}{2}\in\mathcal{A}$.  Then there exists a point $z_0\in \mathcal{A}$ so that for any $\tilde{q}\in Q:= \cup_{l, k=0}^{\infty}\{f^{-l}(f^k(-\frac{1}{2}))\}$
	the Kobayashi distance $d_{\mathcal {A}}(z_0, \tilde{q})\geq C$. 
\end{thmA}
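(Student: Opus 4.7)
The strategy is to exploit the Fatou coordinate at the parabolic fixed point $0$ of $f(z)=z+z^2$, which reveals the infinite ``length'' of the Ecalle cylinder---in contrast with the finite Koenigs cylinder in the attracting case of Theorem~\ref{the1}. Let $\phi:\mathcal A\to\C$ be the (surjective) Fatou coordinate, i.e.\ the holomorphic semi-conjugacy satisfying $\phi\circ f=\phi+1$, obtained by linearizing $f$ biholomorphically on an attracting petal $P\subset\mathcal A$ (with $\phi(P)=\{\operatorname{Re}w>R\}$) and extending globally by $\phi(z)=\phi(f^n(z))-n$. For any $\tilde q\in Q$, we have $f^l(\tilde q)=f^k(-\tfrac12)$ for some $l,k\ge 0$; applying $\phi$ yields $\phi(\tilde q)=\phi(-\tfrac12)+(k-l)$. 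Hence the entire grand orbit sits in a \emph{discrete horizontal line}, $\phi(Q)\subseteq\phi(-\tfrac12)+\Z\subset\C$.

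Given $C>0$, I would pick $z_0\in\mathcal A$ with $\phi(z_0)=\phi(-\tfrac12)+\tfrac12+iT$ for a large $T$ (to be fixed in terms of $C$); such $z_0$ exists by surjectivity of $\phi$. In Fatou coordinates, $z_0$ is then separated from every point of $\phi(Q)$ by at least $T$ in the imaginary direction. The goal is to show that this Euclidean separation forces $d_\mathcal{A}(z_0,\tilde q)\ge C$ for every $\tilde q\in Q$. The main ingredient is a two-sided comparison, on a suitable sub-petal $P'\subset P$, between the Kobayashi metric of $\mathcal A$ and the half-plane hyperbolic metric pulled back by $\phi$: a Koebe-type distortion argument, together with the fact that $P'$ is a hyperbolically thick simply connected subdomain, shows that the two metrics agree on $P'$ up to a multiplicative constant. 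A direct computation in the half-plane then yields that the hyperbolic distance from $\phi(z_0)$ to any integer translate of $\phi(-\tfrac12)$ lying in $\{\operatorname{Re}w>R\}$ is at least $\log T-O(1)$, so taking $T=e^{O(C)}$ handles every $\tilde q\in Q$ whose Fatou coordinate lies in the petal.

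The principal obstacle is extending this estimate to \emph{every} $\tilde q\in Q$, not only those whose Fatou coordinate lies in the petal. A naive Schwarz--Pick attempt through the exponential projection to the Ecalle cylinder, $\psi(z)=e^{2\pi i\phi(z)}:\mathcal A\to\C^*$, collapses $Q$ to the single point $p^*=e^{2\pi i\phi(-\tfrac12)}$; however, $\C^*$ is not Kobayashi-hyperbolic, and removing $p^*$ only yields bounds on the Kobayashi metric of $\mathcal A\setminus Q$, which blow up at every $\tilde q\in Q$ and are therefore useless for $d_{\mathcal A}$. Instead, for $\tilde q$ whose Fatou coordinate $\phi(-\tfrac12)+n$ lies outside the canonical petal, I would argue geometrically that any path in $\mathcal A$ from the deep-petal point $z_0$ to such $\tilde q$ must first exit $P'$ through its narrow throat at the parabolic point, accumulating hyperbolic length at least $\log T+O(1)$ along the way. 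Combining this with the in-petal estimate and fixing $T$ sufficiently large yields the required uniform lower bound $d_{\mathcal A}(z_0,\tilde q)\ge C$ on $Q$.
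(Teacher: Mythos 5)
Your observation that the extended Fatou coordinate sends the grand orbit of $-\tfrac12$ into the discrete set $\phi(-\tfrac12)+\Z$ is correct and is a nice reformulation, but the proposal has a fundamental directional problem that the paper's proof avoids. To conclude $d_{\mathcal A}(z_0,\tilde q)\ge C$ you need a \emph{lower} bound on the Kobayashi metric of $\mathcal A$. Since the petal $P$ (and any sub-petal $P'$) is a \emph{sub}domain of $\mathcal A$, Corollary~\ref{cor3} gives $d_{\mathcal A}\le d_{P'}$, i.e.\ the wrong direction. Your fix is the claimed two-sided comparison $d_{\mathcal A}\asymp d_{P'}$ on $P'$, but this is false exactly where you need it. For $z_0$ with $\phi(z_0)=\phi(-\tfrac12)+\tfrac12+iT$, one has $z_0\approx i/T$ and $|\phi'(z_0)|\approx T^2$, so the Euclidean distance from $z_0$ to $\partial P$ (the curve $\{\Re\phi=R\}$, at Fatou-coordinate distance $O(1)$) is $\asymp T^{-2}$, while the Euclidean distance to $\partial\mathcal A$ (the Julia set, which cusps into the basin along $\R^+$) is $\asymp|z_0|\asymp T^{-1}$. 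By the Koebe quarter theorem, the infinitesimal Kobayashi metric of a planar domain is comparable to the reciprocal of the Euclidean boundary distance, so $F_{P'}(z_0)\asymp T^2$ whereas $F_{\mathcal A}(z_0)\asymp T$: the metrics drift apart by an unbounded factor precisely as $T\to\infty$, and the half-plane estimate $\log T$ does not transfer to $d_{\mathcal A}$. Restricting to a sector-shaped sub-petal where the comparison might hold would exclude your chosen $z_0$ (its Fatou coordinate leaves every sector $|\arg(w-R)|<\pi/2-\delta$ as $T\to\infty$), so the tension is intrinsic to the choice of $z_0$.

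The second gap you flag yourself --- the case of $\tilde q$ whose Fatou coordinate falls outside the petal --- is not a side issue: the backward orbit of $-\tfrac12$ equidistributes toward the Julia set, so most of $Q$ lies outside any fixed petal, and the ``narrow throat'' heuristic would need to be turned into an actual length estimate. The paper sidesteps both problems at once by comparing from the \emph{outside}: since $\mathcal A\subsetneq\C\setminus\R^+$, the explicit formula $F_{\C\setminus\R^+}=\frac{|dz|}{2r\sin(\theta/2)}$ gives a genuine lower bound $d_{\mathcal A}\ge d_{\C\setminus\R^+}$, and the grand orbit is shown to avoid the Pac-Man $D_{R'_0}\setminus\R^-$ (Lemmas~\ref{lem4}--\ref{lem5} plus the observation that $f^k(-\tfrac12)\in\R^-$). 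The paper then places $z_0$ near the ray $l_1=\{re^{i\theta_0}\}$ with $\theta_0$ small --- note this is not in the petal at all; it is in the basin but on the repelling side of the angular range --- so that $z_0$ hugs the excluded ray $\R^+$, and all three case estimates reduce to explicit integrals in $\C\setminus\R^+$. Your $z_0\approx i/T$ sits at argument $\pi/2$, where the distance in $\C\setminus\R^+$ from $z_0$ to the ray $\R^-$ is \emph{bounded} (independently of $T$, by scale invariance of the hyperbolic metric), so the paper's comparison domain would not rescue your choice of $z_0$ either; the smallness of $\theta_0$ in the paper is what makes Case 2 work.
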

Here $\mathcal{A}=\mathcal{A}(0, -1)$ in Theorem A is the parabolic basin of $f=z+z^2$ with the attraction vector ${\bf v}=-1$, see Definitions \ref{def1} and \ref{def3}.

 We also generalize Theorem A to the case of several petals inside the parabolic basin in Theorem B:
 \begin{thmB}\label{the3}
 	Let $f(z)=z+az^{m+1}, m\geq1, a\neq0$, and $\Omega_j$ be the immediate basin of $\mathcal{A}_j.$  
 	We choose an arbitrary constant $C>0$ and a point $q\in{\bf v_j}\cap\mathcal{P}_j$. Then there exists a point $z_0\in \Omega_j$ so that for any $\tilde{q}\in Q:= (\cup_{l=0}^{\infty}f^{-l}(f^k(q)))\cap\Omega_j~(l, k$ are non-negative integers), the Kobayashi distance $d_{\Omega_j}(z_0, \tilde{q})\geq C$, where $d_{\Omega_j}$ is the Kobayashi metric. 
 \end{thmB}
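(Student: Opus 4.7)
The plan is to follow the strategy of Theorem A, adapted to the $m$-petal setting. The idea is to exhibit $z_0$ inside the $j$-th petal whose Fatou-coordinate image has very large imaginary part, and then argue that this forces a large Kobayashi distance to every $\tilde q \in Q$.

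First, introduce the Fatou coordinate $\phi_j : \mathcal P_j \to \mathbb H_R := \{w : \operatorname{Re} w > R\}$, the biholomorphism satisfying $\phi_j \circ f = \phi_j + 1$. Since every orbit in $\Omega_j$ eventually enters $\mathcal P_j$ and $\Omega_j$ is simply connected, the identity $\phi_j(z) := \phi_j(f^n(z)) - n$ extends $\phi_j$ univalently to all of $\Omega_j$. The attracting ray $\mathbf v_j$ is $f$-invariant, as one checks from $f(tv_j) = (t - t^{m+1})v_j$ (using $av_j^m = -1$), and one normalizes $\phi_j$ so that $\phi_j(\mathbf v_j \cap \mathcal P_j) \subset \mathbb R$; in particular $x_q := \phi_j(q)$ is a real number exceeding $R$. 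The functional equation then gives $\phi_j(\tilde q) = x_q + k - l \in \mathbb R$ for every $\tilde q = f^{-l}(f^k(q)) \in Q$, so $\phi_j(Q)$ is contained in the discrete real set $x_q + \mathbb Z$.

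Next, fix a large parameter $y_0 > 0$ and set $z_0 := \phi_j^{-1}(x_q + iy_0) \in \mathcal P_j$. Since $\phi_j$ is a biholomorphism onto $\mathbb H_R$, the hyperbolic distance inside the petal from $z_0$ to any point $\phi_j^{-1}(x_q + n) \in \mathcal P_j$ equals
\[
\cosh^{-1}\!\left(1 + \frac{n^2 + y_0^2}{2(x_q - R)(x_q + n - R)}\right),
\]
whose minimum over admissible $n \in \mathbb Z$ grows like $\log y_0$ as $y_0 \to \infty$. In the petal's own hyperbolic metric, $d_{\mathcal P_j}(z_0, \tilde q)$ therefore exceeds $C$ for every $\tilde q \in Q \cap \mathcal P_j$ once $y_0$ is sufficiently large.

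The main obstacle is to transfer this petal estimate into a lower bound on the Kobayashi distance $d_{\Omega_j}$, since the inclusion $\mathcal P_j \subset \Omega_j$ gives only $d_{\Omega_j} \leq d_{\mathcal P_j}$ — the wrong direction. To carry out this transfer I would exploit the $f$-invariant holomorphic function $\Psi := e^{2\pi i \phi_j}: \Omega_j \to \mathbb C^\ast$, which collapses the grand orbit to the single point $w_q := e^{2\pi i x_q}$ on the unit circle; equivalently, the harmonic function $u(z) := \operatorname{Im} \phi_j(z) = -\frac{1}{2\pi}\log|\Psi(z)/w_q|$ is $f$-invariant, vanishes identically on $Q$, and equals $y_0$ at $z_0$. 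The crux of the proof is to bound $d_{\Omega_j}(z_0, \tilde q)$ from below in terms of $u(z_0) - u(\tilde q) = y_0$ via a Schwarz-type estimate for the extension of $\phi_j$ to $\Omega_j$, combined with the parabolic asymptotics of $\phi_j$ near $0$ and the specific geometry of $\Omega_j \setminus \mathcal P_j$. Once such a bound is in hand, choosing $y_0$ large enough yields $d_{\Omega_j}(z_0, \tilde q) \geq C$ uniformly in $\tilde q \in Q$. Making the Schwarz-type comparison rigorous — in particular, handling chains $\mathbb D \to \Omega_j$ that pass through $\Omega_j \setminus \mathcal P_j$ and those terminating at $\tilde q \notin \mathcal P_j$, where the Bloch quantity $|\phi_j'|/\rho_{\Omega_j}$ blows up near the parabolic fixed point — is the main technical content of the proof, and is exactly where the parabolic case diverges from the attracting case of Theorem 1.1.
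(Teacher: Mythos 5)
Your proposal contains a genuine gap, and I think it would not work as written for two separate reasons.

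First, the extension of the Fatou coordinate $\phi_j$ from the petal $\mathcal{P}_j$ to the whole immediate basin $\Omega_j$ via $\phi_j(z) = \phi_j(f^n(z)) - n$ is \emph{not} univalent, contrary to what you assert. The extended map $\phi_j : \Omega_j \to \C$ is a branched, infinite-to-one covering; already for $f(z)=z+z^2$ it has critical points at every preimage of the critical point $-1/2$ of $f$. So writing $z_0 := \phi_j^{-1}(x_q + iy_0)$ is ambiguous once you leave the petal, and — more seriously — the fibre of $\phi_j$ over $x_q + iy_0$ contains infinitely many points of $\Omega_j$, some of which may lie arbitrarily close (in any reasonable metric) to points of the grand orbit $Q$. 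The normalization $\phi_j({\bf v_j}\cap\mathcal{P}_j)\subset\R$ and the identity $\phi_j(Q)\subset x_q+\Z$ are fine on the petal, but they do not control $Q\cap(\Omega_j\setminus\mathcal{P}_j)$, which is exactly where the interesting preimages live.

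Second, and more fundamentally, you have correctly noticed that the inclusion $\mathcal{P}_j \subset \Omega_j$ gives $d_{\Omega_j} \leq d_{\mathcal{P}_j}$, which is useless for a lower bound; but you then defer the entire resolution of this issue to an unspecified ``Schwarz-type estimate'' for $\Psi = e^{2\pi i \phi_j}$, explicitly labelling it ``the main technical content of the proof.'' That step is not carried out, and it is not at all clear it can be, precisely because the boundary of $\Omega_j$ is typically fractal and the Bloch-type quantity you mention does not behave uniformly. The paper avoids this difficulty entirely by exploiting the inclusion in the \emph{other} direction: for $f(z)=z+az^{m+1}$ (with no higher order terms) the immediate basin $\Omega_j$ sits inside the sector $S=\{re^{i\theta}: 0<\theta<2\pi/m\}$ bounded by the two adjacent repelling rays, so that $d_{\Omega_j} \geq d_S$ by monotonicity of the Kobayashi metric. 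The metric on $S$ is explicitly computable via $\phi(z)=z^{m/2}: S\to\mathbb{H}$, namely $F_S = \tfrac{m}{2r\sin(m\theta/2)}|dz|$. The point $z_0$ is then chosen with small modulus and argument close to the boundary ray of a slightly smaller sector $T$ (a ``Pac-Man'' argument using $\omega=-1/z$ dynamics pins down two nested Pac-Men $D^m_{R'_0}\subset D^m_{R_0}$ with $f^n(D^m_{R'_0})\subset D^m_{R_0}$), and a three-case analysis — $\tilde q$ outside $D^m_{R'_0}$ but inside $T$, $\tilde q$ on the attraction ray ${\bf v_j}$, and $\tilde q$ in the thin sector near $l_1$ — gives the lower bound $d_S(z_0,\tilde q)\geq C$ in each case. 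This containment-in-a-sector idea, which supplies the crucial lower bound without ever estimating $d_{\Omega_j}$ directly, is precisely the ingredient your proposal is missing.
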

 
 Here $ {\bf v_j}$ is an attraction vector in the tangent space of $\C$ at $0$,  $\mathcal{P}_j$ is an {\em  attracting petal} for $f$ for the vector ${\bf v_j}$ at $0$, $\mathcal{A}_j=\mathcal{A}(0, {\bf v_j})$ is the parabolic basin of attraction associated to ${\bf v_j},$ see Definitions \ref{def1},  \ref{def3} and \ref{def4}.
 
 In the end, we sketch how to handle the behavior of orbits inside parabolic basins of general polynomials.
 \begin{thmC}\label{the4}
 	Let $f(z)=z+az^{m+1}+(\text{higher terms}), m\geq1, a\neq0,$ and $\Omega_j$ be the immediate basin of $\mathcal{A}_j.$ We choose an arbitrary constant $C>0$ and a point $q\in{\bf v_j}\cap\mathcal{P}_j$. Then there exists a point $z_0\in \Omega_j$ so that for any $\tilde{q}\in Q:= (\cup_{l=0}^{\infty}f^{-l}(f^k(q)))\cap\Omega_j~ (l, k$ are non-negative integers), the Kobayashi distance $d_{\Omega_j}(z_0, \tilde{q})\geq C$, where $d_{\Omega_j}$ is the Kobayashi metric. 
 \end{thmC}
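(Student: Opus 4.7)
The approach is to reduce Theorem C to the argument of Theorem B via the Leau--Fatou flower theorem, which guarantees that the higher-order terms in $f(z) = z + az^{m+1} + \cdots$ do not alter the conjugacy class of $f$ on its attracting petals. On each attracting petal $\mathcal{P}_j$ there exists a univalent Fatou coordinate $\Phi_j : \mathcal{P}_j \to \{\Re w > R_j\}$ satisfying $\Phi_j \circ f = \Phi_j + 1$, so that under $\Phi_j$ the forward orbit $\{f^k(q)\}_{k\geq 0}$ becomes the half-line of integer translates $\{\Phi_j(q) + k\}_{k\geq 0}$, exactly as in the normal-form case treated in Theorem B.

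The plan is to choose $z_0 \in \mathcal{P}_j$ by the same recipe as in the proof of Theorem B: $\Phi_j(z_0)$ should have imaginary part much larger than a quantity depending on $C$, so that in the hyperbolic metric of the right half-plane the distance from $\Phi_j(z_0)$ to every $\Phi_j(q)+k$ is at least $C+1$. Since $\mathcal{P}_j$ embeds into $\Omega_j$, one needs a lower bound on $d_{\Omega_j}$ in terms of the hyperbolic metric on $\Phi_j(\mathcal{P}_j)$; this is obtained by embedding $\Omega_j$ into a larger hyperbolic domain (for instance $\hat{\mathbb{C}}$ minus a finite part of the postcritical orbit together with $0$) whose Kobayashi distance can be estimated, and then comparing with $\mathcal{P}_j$ on the relevant region.

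Preimages $\tilde q = f^{-l}(f^k(q)) \in \Omega_j$ that lie outside $\mathcal{P}_j$ are handled by the distance-decreasing property of $f$: for any such $\tilde q$ one has $f^l(\tilde q) = f^k(q) \in \mathcal{P}_j$, hence
\begin{equation*}
d_{\Omega_j}(z_0, \tilde q) \;\geq\; d_{\Omega_j}\bigl(f^l(z_0),\, f^l(\tilde q)\bigr) \;=\; d_{\Omega_j}\bigl(f^l(z_0),\, f^k(q)\bigr),
\end{equation*}
so the preimage estimate reduces to the forward-orbit estimate, provided that the forward orbit of $z_0$ remains uniformly Kobayashi-far from $\{f^k(q)\}_{k\geq 0}$ in $\Omega_j$.

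The main obstacle is precisely this last requirement, since the forward orbit of $z_0$ is itself parabolic and must eventually enter $\mathcal{P}_j$. The key observation is that once $f^l(z_0) \in \mathcal{P}_j$, its Fatou coordinate equals $\Phi_j(z_0) + l$, which differs from $\Phi_j(z_0)$ only by a real translation, so its imaginary part is preserved exactly; choosing $\Im \Phi_j(z_0)$ sufficiently large therefore maintains the separation from the orbit line of $q$ uniformly in $l$. The higher-order terms of $f$ enter only through the existence of the Fatou coordinate and through bounded-distortion corrections to the metric comparisons, all of which are absorbed into the constants. This reduces the general polynomial case to the normal-form case of Theorem B with only notational overhead.
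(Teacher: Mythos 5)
Your reduction to Fatou coordinates has the inequalities going in the wrong direction and rests on a false ``key observation.''

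The Fatou coordinate $\Phi_j$ is defined on the petal $\mathcal{P}_j$, and $\mathcal{P}_j \subsetneq \Omega_j$. Since the Kobayashi metric shrinks under inclusion, the hyperbolic metric on $\Phi_j(\mathcal{P}_j)$ (essentially a right half-plane) gives an \emph{upper} bound on $d_{\Omega_j}$, not the lower bound the theorem requires. To bound $d_{\Omega_j}$ from below one must compare $\Omega_j$ with a \emph{larger} domain whose Kobayashi metric is computable; the paper does this with the double-sheeted sector $V_R$, and the genuinely new ingredient in passing from Theorem B to Theorem C is Proposition \ref{pro3}, a maximum-principle argument showing the basin does not wind around $0$ and hence does fit inside $V_R$. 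You gesture at ``embedding $\Omega_j$ into a larger hyperbolic domain,'' but the whole difficulty of the higher-order case is \emph{which} domain and \emph{why} the basin sits inside it, and that is left unaddressed.

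More fatally, even inside the Fatou-coordinate picture your central claim fails. In a half-plane $\{\Re w > R\}$ the Poincar\'e density is comparable to $1/\Re w$, so the hyperbolic distance between $\Phi_j(z_0)+l$ and $\Phi_j(q)+k$ with, say, $k=l$ is of order $\bigl|\Im\Phi_j(z_0)-\Im\Phi_j(q)\bigr|/l$, which tends to $0$ as $l\to\infty$ no matter how large you make $\Im\Phi_j(z_0)$. Preserving the imaginary part under the translation $w\mapsto w+1$ does \emph{not} preserve hyperbolic separation; the metric degenerates as one goes deeper into the half-plane. Consequently the forward orbits of $z_0$ and $q$ necessarily become Kobayashi-close in $\Omega_j$ (both converge to the cusp at $0$), so the premise ``the forward orbit of $z_0$ remains uniformly Kobayashi-far from $\{f^k(q)\}$'' --- which you correctly identify as what the distance-decreasing reduction requires --- is simply false, and the reduction collapses. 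The paper avoids this trap entirely: it never tracks the forward orbit of $z_0$. Instead $z_0$ is taken close to the ray $l_1$ at angle $\theta_0$ and at small radius $\epsilon$ (which, in your coordinates, corresponds to $\Re\Phi_j(z_0)$ large and \emph{negative} --- outside the petal), and the bound comes from a direct three-case estimate of the Kobayashi metric on the comparison domain, using the topological fact (Remark 3 in the proof of Theorem A) that preimages of the real orbit cannot enter a small Pac-Man off the axis $\R^-$. None of that structure is present in your argument.
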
 

 Furthermore, using the distance decreasing property (see Proposition \ref{pro2}) of the Kobayashi metric, Theorem C implies the following Corollary D, which says the set of all $z_0$ is dense in the boundary of the parabolic basin of $f$:
 \begin{corD}\label{co1}
 	Let $\Omega_{i,j}$ be the connected components of $\mathcal{A}_j$. Let $X\subset\mathcal{A}_j$ be the set of all $z_0\in\mathcal{A}_j$ so that $d_{\Omega_{i,j}}(z_0, \tilde{q})\geq C$ for any $\tilde{q}$ in the same connected component $\Omega_{i,j}$ as $z_0$. If $z\in X,$ then any point $w\in f^{-1}(z)$ is in $X.$ Hence $X$ is dense in the boundary of $\mathcal{A}_j$.
 \end{corD}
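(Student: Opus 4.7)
The plan is to verify the two parts of the statement in succession: first the backward invariance of $X$ under $f$, and then, using the classical density of inverse orbits, promote this to density of $X$ in $\partial\mathcal{A}_j$.

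For the backward invariance, fix $z\in X$ and any $w\in f^{-1}(z)$. Let $\Omega'$ be the connected component of $\mathcal{A}_j$ containing $w$ and $\Omega$ the one containing $z$. Since $f(w)=z\in\Omega$ and Fatou components map into Fatou components, $f(\Omega')\subseteq\Omega$. The set $Q=\bigcup_{l,k\geq 0}f^{-l}(f^{k}(q))$ is forward invariant: for $l\geq 1$ one has $f(f^{-l}(f^{k}(q)))\subseteq f^{-(l-1)}(f^{k}(q))\subseteq Q$, and for $l=0$, $f(f^{k}(q))=f^{k+1}(q)\in Q$. Hence any $\tilde w\in Q\cap\Omega'$ satisfies $f(\tilde w)\in Q\cap\Omega$. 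Applying Proposition \ref{pro2} (the distance decreasing property of the Kobayashi metric) to $f:\Omega'\to\Omega$ yields
\[
d_{\Omega'}(w,\tilde w)\;\geq\; d_{\Omega}(f(w),f(\tilde w))\;=\;d_{\Omega}(z,f(\tilde w))\;\geq\; C,
\]
where the final inequality is the hypothesis $z\in X$ applied to the point $f(\tilde w)\in Q\cap\Omega$. This shows $w\in X$.

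For the density statement, pick $z_0\in X$ as supplied by Theorem C. Since $z_0\in\mathcal{A}_j\subset\mathbb{C}$, it is not the point $\infty$, and $\infty$ is the only exceptional point of the polynomial $f$; hence $z_0$ is non-exceptional. By the classical inverse-orbit density theorem (Brolin \cite{RefBrolin}), $\bigcup_{n\geq 0}f^{-n}(z_0)$ is dense in the Julia set $J(f)$. This backward orbit lies inside $\mathcal{A}_j$, which is totally invariant under $f^{-1}$, and, by iterating the first part, is entirely contained in $X$. Since $\partial\mathcal{A}_j\subseteq J(f)$, it follows that $X$ is dense in $\partial\mathcal{A}_j$.

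There is essentially no hard step: the content of the corollary lies in Theorem C, which produces the seed point in $X$, and in the classical equidistribution of backward orbits for non-exceptional points. The only bookkeeping concerns tracking how the connected components of $\mathcal{A}_j$ map to one another under $f$ and invoking the Kobayashi contraction componentwise.
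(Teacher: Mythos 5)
Your proof is correct and follows essentially the same approach as the paper: distance decreasing of the Kobayashi metric under $f$ for the backward invariance, and the classical density of inverse orbits in the Julia set for the density statement. The only differences are cosmetic — you argue directly rather than by contradiction, and you spell out two details the paper leaves implicit (the forward invariance of $Q$, and that $f$ maps the component of $w$ into the component of $z$) — which makes your version a bit more careful but not a different argument.
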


This paper is organized as follows: in section 2, we recall some definitions and results \cite{RefM} about holomorphic dynamics of polynomials in a neighborhood of the parabolic fixed point and the Kobayashi metric; in section 3, we prove our main results, Theorem A, Theorem B, and Theorem C.

\section*{Acknowledgement}
I appreciate my advisor John Erik Forn\ae ss very much for giving me this research problem and for his significant comments and patient guidance. In addition, I am very grateful for all support from the mathematical department during my visit to NTNU in Norway so that this research can work well.

\section{Preliminary}
	\subsection{Holomorphic dynamics of polynomials in a neighborhood of the parabolic fixed point. }
Let us first recall some definitions and results \cite{RefM} about holomorphic dynamics of a polynomial $f=z+a_2z^2+a_3z^3+\cdots$ in a neighborhood of the parabolic fixed point $0$. 
\begin{defn}\label{def1}
	Let $f(z)=z+az^{m+1}+(\text{higher terms}), m\geq1, a\neq0.$ A complex number ${\bf v}$ will be called an {\em attraction vector} at the origin if $ma{\bf v}^m=-1, $ and  a {\em repulsion vector} at the origin if $ma{\bf v}^m=1. $  Note here that ${\bf v}$ should be thought of as a tangent vector at the origin.
	We say that some orbit $z_0\mapsto z_1 \mapsto z_2\mapsto\cdots$ for the map $f$ converges to zero {\em nontrivially} if $z_n\rightarrow0$ as $n\rightarrow\infty,$ but no $z_n$ is actually equal to zero. 
	There are $m$ equally spaced attraction vectors at the origin.       
\end{defn}

\begin{lem}\label{lem12}
	If an orbit $f: z_0\mapsto z_1\mapsto\cdots$ converges to zero nontrivially, then $z_k$ is asymptotic to ${\bf v_j}/\sqrt[m]{k}$ as $k\rightarrow +\infty$ for one of the $m$ attraction vectors ${\bf v_j}.$ 
\end{lem}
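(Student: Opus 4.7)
The plan is to conjugate $f$ near $0$ to an approximate unit translation via the Fatou-flower coordinate $w = -1/(maz^m)$ and then carry out the iteration in the $w$-variable.

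First, I would expand $(f(z)/z)^{-m} = 1 - ma z^m + O(z^{m+1})$ and divide by $z^m$ to obtain $1/f(z)^m = 1/z^m - ma + O(z)$. Multiplying by $-1/(ma)$ gives
\begin{equation*}
F(w) := -\frac{1}{ma\, f(z(w))^m} = w + 1 + o(1) \qquad \text{as } |w| \to \infty,
\end{equation*}
where $z(w) = (-1/(maw))^{1/m}$ is one of the $m$ local inverse branches. Because $z\mapsto z^m$ is $m$-to-$1$, each branch corresponds, as $w\to +\infty$ along the positive real axis, to an approach of $z$ to $0$ tangent to exactly one of the attraction vectors ${\bf v_j}$, namely the one satisfying ${\bf v_j}^m = -1/(ma)$.

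Next, I would show that an orbit $z_k\to 0$ nontrivially must eventually land on a single branch with $\Re w_k \to +\infty$. For $|w|$ sufficiently large, the bound $|F(w) - (w+1)| < 1/4$ yields $\Re F(w) > \Re w + 3/4$. Hence once $z_k$ is close enough to $0$, I pick the branch whose image contains $w_k$ and conclude that the orbit remains on this branch with $\Re w_{k+1} > \Re w_k + 3/4$, drifting to $+\infty$ in $\Re w$. This simultaneously selects the attraction vector ${\bf v_j}$ predicted by the lemma.

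Now I telescope in the $w$-coordinate:
\begin{equation*}
w_k = w_{k_0} + (k - k_0) + \sum_{j = k_0}^{k-1} \bigl(F(w_j) - w_j - 1\bigr).
\end{equation*}
From $\Re w_j \gtrsim j$ and $|F(w) - (w+1)| = O(|w|^{-1/m})$ the error term is sublinear, so $w_k / k \to 1$. Inverting the coordinate change gives $z_k^m = -1/(ma w_k) \sim -1/(mak)$, and taking the $m$-th root on the previously fixed branch yields $z_k \sim {\bf v_j}/\sqrt[m]{k}$, as claimed.

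The main obstacle I expect is the second step: justifying that the orbit actually settles onto a single branch rather than oscillating among several sectors near $0$. A priori, if the perturbation $F(w)-(w+1)$ were large, the orbit could drift from one sectorial sheet to another; the saving point is precisely that for $|w|$ large the $o(1)$ error is dominated by the horizontal shift $+1$, so any orbit sufficiently close to $0$ is trapped in one half-plane $\{\Re w > R\}$ and hence forced tangent to a single attraction direction.
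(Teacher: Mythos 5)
The paper gives no proof of its own for this lemma---it simply cites Chapter~10 of Milnor's book---and your sketch is exactly the classical Leau--Fatou argument found there: conjugation by $w=-1/(maz^m)$, the near-translation $F(w)=w+1+o(1)$, telescoping with $\sum_j|w_j|^{-1/m}=o(k)$, and inversion. The branch-trapping worry you flag at the end is resolved just as you anticipate: once $\Re w_k\to+\infty$ the argument of $z_k^m$ locks onto $\arg(-1/(ma))$, so $z_k$ sits in a fixed angular sector bounded away from the $m$ repelling rays, while $|z_{k+1}-z_k|=O(|z_k|^{m+1})=o(|z_k|)$ is too small a step to carry the orbit into an adjacent sector, and this singles out the attraction vector ${\bf v_j}$.
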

\begin{proof}
	See the proof in chapter 10 of Milnor's book \cite{RefM}.
\end{proof}

\begin{defn}\label{def2}
	If an orbit $z_0\mapsto z_1\mapsto\cdots$ under $f$ converges to zero with $z_k\sim {\bf v_j}/\sqrt[m]{k}$, we will say that this orbit $\{z_k\}$ tends to zero from the direction ${\bf v_j}.$ 
\end{defn}

\begin{defn}\label{def3}
	Given an attraction vector $ {\bf v_j}$ in the tangent space of $\C$ at $0$, the associated {\em parabolic basin of attraction} $\mathcal{A}_j=\mathcal{A}(0, {\bf v_j})$ is defined to be the set consisting of all $z_0\in\C$ for which the orbit $z_0\mapsto z_1\mapsto\cdots$ converges to $0$ from the direction ${\bf v_j}$. 
	
\end{defn}  

\begin{defn} \label{def4}
	Suppose $f$ is defined and univalent on some neighborhood $N\in\mathbb{C}.$ An open set $\mathcal{P}_j\subset N$ is called an {\em  attracting petal} for $f$ for the vector ${\bf v_j}$ at $0$ if 
	
	(1) $f$ maps $\mathcal{P}_j$ into itself, and
	
	(2) an orbit $z_0\mapsto z_1\mapsto\cdots$ under $f$ is eventually absorbed by $\mathcal{P}_j$ if and only if it converges to $0$ from the direction ${\bf v_j}.$   
\end{defn}

\subsection{The Kobayashi metric}\label{subsec1}
\begin{defn}\label{def5}
	Let $\hat{\Omega}\subset\mathbb C$ be a domain. We choose a point $z\in \hat{\Omega}$ and a vector $\xi$ tangent to the plane at the point $z.$ Let $\triangle$ denote the unit disk in the complex plane.
	We define the {\em Kobayashi metric} 
	$$
	F_{\hat{\Omega}}(z, \xi):=\inf\{\lambda>0 : \exists f: \triangle\stackrel{hol}{\longrightarrow} \hat{\Omega}, f(0)=z, \lambda f'(0)=\xi\}.
	$$

	Let $\gamma: [0, 1]\rightarrow \hat{\Omega}$ be a piecewise smooth curve.
	The {\em Kobayashi length} of $\gamma$ is defined to be 
	$$ L_{\hat{\Omega}} (\gamma)=\int_{\gamma} F_{\hat{\Omega}}(z, \xi) \lvert dz\rvert=\int_{0}^{1}F_{\hat{\Omega}}\big(\gamma(t), \gamma'(t)\big)\lvert \gamma'(t)\rvert dt.$$

	For any two points $z_1$ and $z_2$ in $\hat{\Omega}$, the {\em Kobayashi distance} between $z_1$ and $z_2$ is defined to be 
	$$d_{\hat{\Omega}}(z_1, z_2)=\inf\{L_{\hat{\Omega}} (\gamma): \gamma ~ \text{is a piecewise smooth curve connecting} ~z_1~ \text{and} ~z_2 \}.$$

	Note that $d_{\hat{\Omega}}(z_1, z_2)$ is defined when $z_1$ and $z_2$ are in the same connected component of $\hat{\Omega}.$

	
\end{defn}


\begin{prop}[The distance decreasing property of the Kobayashi Metric \cite{RefK}]\label{pro2}
	Suppose $\Omega_1$ and $\Omega_2$ are domains in $\mathbb C$, $z, \omega\in \Omega_1, \xi\in\mathbb C,$ and $f:\Omega_1\rightarrow\Omega_2$ is holomorphic. Then 
	$$F_{\Omega_2}(f(z
	), f'(z)\xi)\leq F_{\Omega_1}(z, \xi), ~~~d_{\Omega_2}(f(z), f(\omega))\leq d_{\Omega_1}(z,\omega).$$
\end{prop}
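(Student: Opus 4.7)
The plan is to reduce the distance inequality to the infinitesimal inequality via integration, and to derive the infinitesimal inequality directly from the extremal problem defining the Kobayashi metric. The entire argument is a formal unwinding of Definition \ref{def5}, resting on the single observation that post-composing any holomorphic disc $g:\triangle\to\Omega_1$ with the holomorphic map $f:\Omega_1\to\Omega_2$ produces again a holomorphic disc $f\circ g:\triangle\to\Omega_2$ in the target domain.

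For the infinitesimal inequality, I would take any admissible $\lambda>0$ for $F_{\Omega_1}(z,\xi)$, realized by a holomorphic $g:\triangle\to\Omega_1$ with $g(0)=z$ and $\lambda g'(0)=\xi$. Setting $h:=f\circ g$, the chain rule gives $h(0)=f(z)$ and
$$\lambda h'(0)=\lambda f'(g(0))g'(0)=f'(z)\,\lambda g'(0)=f'(z)\xi,$$
so the same $\lambda$ is admissible for $F_{\Omega_2}(f(z),f'(z)\xi)$. Passing to the infimum over all admissible $\lambda$ yields $F_{\Omega_2}(f(z),f'(z)\xi)\leq F_{\Omega_1}(z,\xi)$.

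For the distance inequality, I would fix a piecewise smooth curve $\gamma:[0,1]\to\Omega_1$ joining $z$ and $\omega$, and set $\sigma:=f\circ\gamma$. Then $\sigma$ is a piecewise smooth curve in $\Omega_2$ joining $f(z)$ and $f(\omega)$, with $\sigma'(t)=f'(\gamma(t))\gamma'(t)$ at smooth points. Applying the infinitesimal inequality pointwise to $(z,\xi)=(\gamma(t),\gamma'(t))$ and integrating against the length element of Definition \ref{def5} shows $L_{\Omega_2}(\sigma)\leq L_{\Omega_1}(\gamma)$; taking the infimum over all such curves $\gamma$ in $\Omega_1$ then gives $d_{\Omega_2}(f(z),f(\omega))\leq d_{\Omega_1}(z,\omega)$.

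There is no serious obstacle here: this is a classical fact whose proof is forced by the definitions. The only subtlety is verifying that the chain-rule output $f'(z)\xi$ matches exactly the tangent vector prescribed in the extremal problem for $F_{\Omega_2}$, and then making sure the pointwise infinitesimal bound integrates to the bound on Kobayashi lengths under the convention of Definition \ref{def5}.
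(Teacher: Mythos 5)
Your proof is correct and is the standard argument for this classical fact: post-composition of competitor discs with $f$ gives the infinitesimal inequality, and integrating that pointwise bound along curves (with the image curve $f\circ\gamma$ as competitor) gives the distance inequality. The paper itself offers no proof of Proposition~\ref{pro2} — it is stated as a known result with a citation to Krantz \cite{RefK} — so there is no in-paper argument to compare against; your write-up fills that gap with exactly the proof one would find in the cited reference.
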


\begin{cor}\label{cor3}
	Suppose $\Omega_1\subseteq\Omega_2\subseteq\mathbb C.$ Then for any $z, \omega\in \Omega_1$ and $\xi\in\mathbb C,$ we have 
	$$F_{\Omega_2}(z, \xi)\leq F_{\Omega_1}(z, \xi), ~~~~d_{\Omega_2}(z, \omega)\leq d_{\Omega_1}(z,\omega).$$
\end{cor}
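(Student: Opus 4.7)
The plan is to derive both inequalities as immediate consequences of Proposition \ref{pro2}, applied to the tautological inclusion map.

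First, I would consider the inclusion map $\iota\colon \Omega_1 \hookrightarrow \Omega_2$ defined by $\iota(z)=z$. Because $\Omega_1\subseteq\Omega_2$, this is a well-defined holomorphic map, and its derivative satisfies $\iota'(z)=1$ for every $z\in\Omega_1$. Applying the infinitesimal half of Proposition \ref{pro2} to $\iota$ at an arbitrary $z\in\Omega_1$ with tangent vector $\xi$ yields
$$
F_{\Omega_2}\bigl(\iota(z),\iota'(z)\xi\bigr)=F_{\Omega_2}(z,\xi)\leq F_{\Omega_1}(z,\xi),
$$
which is the first inequality. Applying the integrated half of Proposition \ref{pro2} to $\iota$ at $z,\omega\in\Omega_1$ yields
$$
d_{\Omega_2}(z,\omega)=d_{\Omega_2}\bigl(\iota(z),\iota(\omega)\bigr)\leq d_{\Omega_1}(z,\omega),
$$
which is the second inequality.

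As a sanity check, I would note that the infinitesimal inequality also follows directly from Definition \ref{def5}: any holomorphic map $g\colon \triangle\to\Omega_1$ with $g(0)=z$ and $\lambda g'(0)=\xi$ is automatically a holomorphic map $g\colon\triangle\to\Omega_2$ with the same normalization, since $\Omega_1\subseteq\Omega_2$. Hence the set of $\lambda$'s over which the infimum is taken in $F_{\Omega_2}(z,\xi)$ contains the corresponding set for $F_{\Omega_1}(z,\xi)$, so the infimum cannot increase. Integrating this pointwise bound along any piecewise smooth curve in $\Omega_1$ joining $z$ to $\omega$ (which is in particular a curve in $\Omega_2$) and taking the infimum over such curves recovers the distance inequality. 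There is essentially no obstacle here; the statement is built into the functoriality of the Kobayashi pseudometric, and the "hard" step, namely Proposition \ref{pro2}, has already been invoked.
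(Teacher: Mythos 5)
Your proof is correct and is exactly the argument the paper intends (the paper states the corollary without explicit proof, as an immediate consequence of Proposition \ref{pro2} applied to the inclusion $\Omega_1\hookrightarrow\Omega_2$). The sanity-check paragraph is a valid direct argument from Definition \ref{def5} but adds nothing essential.
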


\section{ Proof of the main theorems}
In this section, we will prove our main theorems: Theorem A, Theorem B, and Theorem C.
\subsection{Dynamics inside the parabolic basin of $f(z)=z+z^2$}\label{subsec2}
Let us recall the statement of our main Theorem A:
%
\begin{thmA}\label{the}
	Let $f(z)=z+z^2$. We choose an arbitrary constant $C>0$ and the point $q=-\frac{1}{2}\in\mathcal{A}$.  Then there exists a point $z_0\in \mathcal{A}$ so that for any $\tilde{q}\in Q:= \cup_{l, k=0}^{\infty}\{f^{-l}(f^k(-\frac{1}{2}))\}$
	the Kobayashi distance $d_{\mathcal {A}}(z_0, \tilde{q})\geq C$. 
\end{thmA}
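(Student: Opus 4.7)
The plan is to use the Fatou coordinate at the parabolic fixed point $0$ to reduce the problem to a concrete computation in a half-plane, where the grand orbit $Q$ becomes a discrete arithmetic progression on a line. I would fix an attracting petal $\mathcal{P} \subset \mathcal{A}$ at $0$ with attraction vector $-1$, together with a biholomorphism $\phi : \mathcal{P} \to H = \{\Re w > R\}$ satisfying $\phi \circ f = \phi + 1$. Extending via $\Phi(z) := \phi(f^n(z)) - n$ for any $n$ with $f^n(z) \in \mathcal{P}$ gives a holomorphic $\Phi : \mathcal{A} \to \C$. After replacing $q = -1/2$ by a sufficiently high forward iterate lying in $\mathcal P$ (which modifies $Q$ by only finitely many points and does not affect the conclusion), we may assume $w_0 := \phi(q) \in H$. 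Recall $\mathcal A$ is the cauliflower, hence simply connected, so $d_{\mathcal A}$ coincides with the hyperbolic metric.

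The key identity is $\Phi(Q) = w_0 + \Z$, since $\Phi(f^{-l}(f^k(q))) = \Phi(q) + k - l$; thus in Fatou coordinates the entire grand orbit collapses to a single arithmetic sequence on a horizontal line. I then set
$$
z_0 \;:=\; \phi^{-1}(w_0 + iT) \;\in\; \mathcal{P},
$$
for a parameter $T > 0$ to be chosen large in terms of $C$, and argue $d_{\mathcal{A}}(z_0, \tilde q) \geq C$ for every $\tilde q \in Q$. The argument splits: (A) if $\tilde q \in \mathcal{P}$ then $\phi(\tilde q) = w_0 + m$ for some $m \in \Z$, and the hyperbolic distance in $H$ between $w_0+iT$ and $w_0+m$, namely $\operatorname{arccosh}\!\bigl(1 + (T^2 + m^2)/(2\Re(w_0)\Re(w_0+m))\bigr)$, is minimized near $m \approx T$ and is of order $\log T$ uniformly in $m \geq 0$; (B) if $\tilde q \notin \mathcal P$ then $\tilde q$ is Euclidean-separated from $0$ while $z_0 \approx i/T$ sits deep inside the cusp of $\mathcal A$ at $0$, and any connecting path is forced through the narrow cusp, accumulating Kobayashi length of order $\log T$ via the Koebe lower bound $\rho_{\mathcal{A}}(z) \geq (4\,\dist(z,\partial \mathcal{A}))^{-1}$.

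The principal obstacle is upgrading the petal estimate of case (A) into an honest lower bound on $d_{\mathcal A}$, because Corollary \ref{cor3} supplies only the wrong inequality $d_{\mathcal A} \leq d_{\mathcal P}$. I plan to resolve this by a reverse comparison near the cusp: the Koebe density lower bound, integrated along any path joining $z_0$ to a nearby $\tilde q \in Q \cap \mathcal P$, yields the required logarithmic bound once one checks that $\dist(\cdot,\partial \mathcal A)$ and $\dist(\cdot,\partial \mathcal P)$ are of the same order on a cusp-like neighborhood of $0$ (both pinch polynomially to the parabolic point, so the two hyperbolic metrics are comparable there). Equivalently, one may conjugate $f$ by the Riemann map $\psi : \D \to \mathcal A$ to a degree-two inner function with a parabolic boundary fixed point, and extract the bound from the horocyclic distribution of the grand orbit of the critical point inside $\D$. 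This is the step I expect to be delicate; everything else is a direct half-plane computation once this comparison is in hand.
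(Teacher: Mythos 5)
Your Fatou-coordinate reformulation of the grand orbit is a genuinely nice observation: extending $\phi$ to $\Phi:\mathcal A\to\C$ via the Abel equation does collapse $Q$ onto $w_0+\Z$, and in particular $Q\cap\mathcal P=\{\phi^{-1}(w_0+m):m\in\Z,\ \Re(w_0+m)>R\}$. That structural fact is correct. However, the crucial step you flagged as ``the delicate one'' --- the reverse comparison $d_{\mathcal A}\gtrsim d_{\mathcal P}$ near the cusp --- is not merely delicate; it is false at exactly the point $z_0$ you propose, and your candidate $z_0=\phi^{-1}(w_0+iT)$ does not satisfy the conclusion of the theorem.

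Here is the problem concretely. Since $\phi(z)=-1/z+O(\log)$, the point $z_0=\phi^{-1}(w_0+iT)$ sits at $z_0\approx i/T$, i.e.\ on the positive imaginary axis, which is the direction of \emph{tangency} between the petal and $\partial\mathcal P$ but the direction in which $\mathcal A$ is \emph{widest}: the Julia set near $0$ comes in tangent to the repelling direction $\R^+$, so $\operatorname{dist}(z_0,\partial\mathcal A)\asymp|z_0|\asymp 1/T$, whereas $\operatorname{dist}(z_0,\partial\mathcal P)\asymp 1/T^2$ because $\mathcal P$ is tangent to the imaginary axis at $0$. Thus $F_{\mathcal P}(z_0)/F_{\mathcal A}(z_0)\to\infty$, and there is no hope of a two-sided comparison. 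Worse, for $\tilde q=f^{k}(q)\approx -1/k$ with $k\approx 2T$, the straight segment from $z_0\approx i/T$ to $\tilde q\approx -1/(2T)$ stays in the region $\{\pi/2\le\arg z\le\pi,\ |z|\gtrsim 1/T\}$, which contains round disks of radius comparable to $|z|$ inside $\mathcal A$; hence $F_{\mathcal A}\lesssim 1/|z|\lesssim T$ along the segment, while the segment has Euclidean length $O(1/T)$, giving $d_{\mathcal A}(z_0,\tilde q)=O(1)$ uniformly in $T$. So the very forward orbit of $q$ comes within bounded Kobayashi distance of your $z_0$ in $\mathcal A$, even though your half-plane computation correctly shows $d_{\mathcal P}(z_0,\tilde q)\gtrsim\log T$. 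The two statements are consistent (only $d_{\mathcal A}\le d_{\mathcal P}$ is automatic), but they mean your $z_0$ is the wrong point: the Koebe comparison you hope for cannot be made, because $\partial\mathcal P$ and $\partial\mathcal A$ are \emph{not} of the same order near $0$ along the imaginary-axis direction.

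The paper's proof avoids this by never using the petal metric and by choosing $z_0$ near the \emph{corner} of the cusp rather than its middle: it fixes a small angle $\theta_0$, works with the explicitly computable Kobayashi metric of $\mathbb C\setminus\R^+\supset\mathcal A$ as a global lower bound, and takes $z_0$ with $|z_0|=\epsilon$ tiny \emph{and} $\arg z_0$ close to $\theta_0$ (close to $\partial\mathcal A$ on one side). A Pac-Man lemma shows the whole grand orbit $Q$ avoids the small Pac-Man $D_{R_0'}$, so every $\tilde q$ is separated from $z_0$ by one of three barriers (radial, the ray $\R^-$, or the thin sector near $\R^+$), and each barrier distance is estimated directly in $\mathbb C\setminus\R^+$. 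If you want to keep the Fatou-coordinate bookkeeping for $Q$, you would still need to move $z_0$ to the edge of the cusp and replace the petal comparison by a comparison against $\mathbb C\setminus\R^+$ (or a slit sector), at which point you are essentially rederiving the paper's argument.
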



As pointed out above, this is opposite to Theorem \ref{the1} in the paper \cite{RefH} for attracting basins. 

\begin{proof}
	Let $\R ^+=[0,\infty)$ and $\R ^-=(-\infty, 0]$ be the positive and negative real axis, respectively. Then the parabolic basin $\mathcal{A}\subsetneqq(\mathbb C\setminus\R ^+).$
	Let $\mathbb H$ be the upper half-plane, 
	$\varphi(z): \mathbb C\setminus\mathbb R^+\rightarrow \mathbb H$ with $\varphi(z)=\sqrt{z}$ (see Figure 1). We know the Kobayashi metric on the upper half plane is $F_{\mathbb H}=\frac{|d\omega|}{\Im \omega}$ for $\omega\in\mathbb H.$ Hence the Kobayashi metric on $\mathbb C\setminus\R^+$ is $F_{\mathbb C\setminus\R^+}=\frac{|\varphi'|}{\Im \varphi}|dz|=\frac{1}{2|\sqrt{z}|\Im \sqrt{z}}|dz|=\frac{1}{2r\sin\frac{\theta}{2}}|dz|$ for $z=re^{i\theta}\in\mathbb C\setminus\R^+, i.e., \theta\in(0, 2\pi).$

	\begin{figure}[!htb]
		\centering 
		\includegraphics[width=0.75\textwidth,height=0.15\textheight]{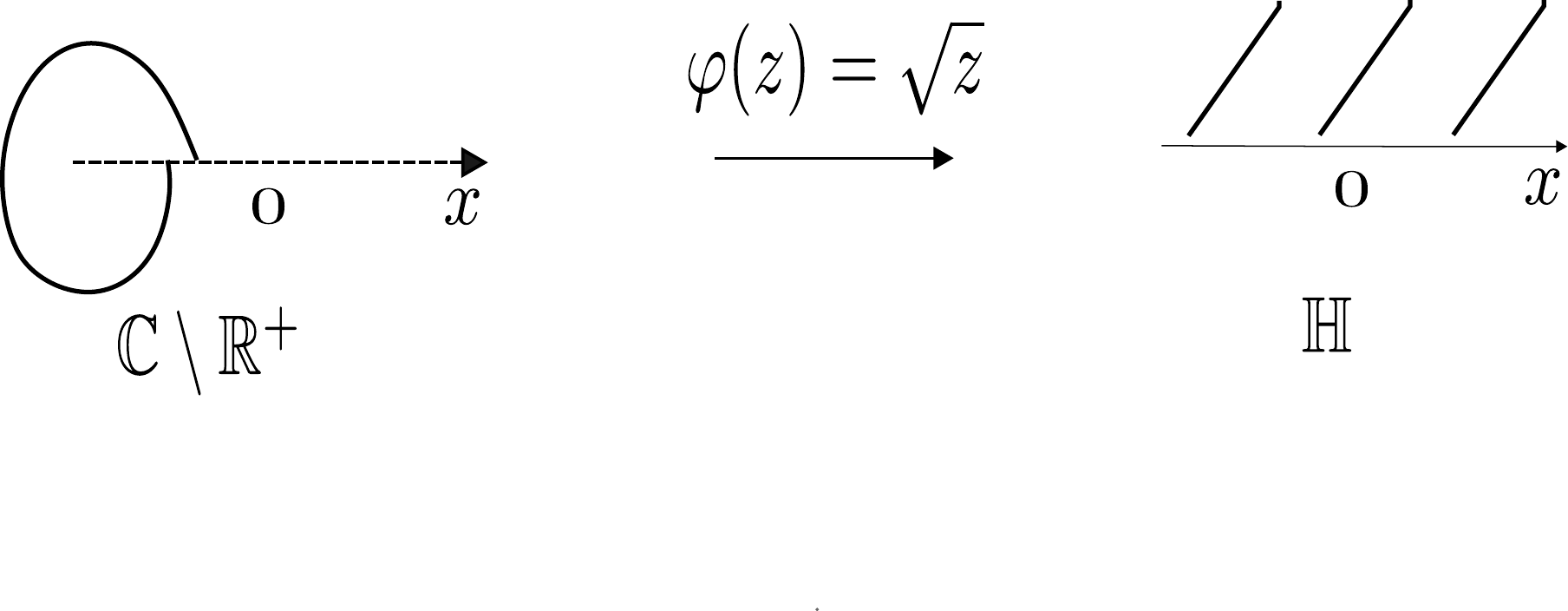}
		\caption{The map $\phi$}
		\label{Figure1}
	\end{figure}

	We know that $f^k(q)$ will always be on the negative real axis for any integer $k\geq0$. 
	We choose $\theta_0>0$ and two rays $l_1:=\{z=re^{i\theta_0}, r>0\}, l_2:=\{z=re^{-i\theta_0}, r>0\}$. Then we denote by
 $T:=\{z=re^{i\theta}, r>0, \theta_0\leq\theta\leq2\pi-\theta_0\}$, a sector inside $\mathbb{C}\setminus\R^+$. 
	
	Before continuing with the proof of Theorem A, we have the following well-known Lemma \ref{lem4}. For the reader's convenience and to introduce notation, we include the proof and define a Left/Right Pac-Man for an easy explanation of the proof.

	\begin{defn} \label{def6}
We call a domain $D_{R}: =\{z=r e^{i\theta}, 0<r\leq R,  \theta_0<\theta<2\pi-\theta_0\}$ 
 a {\em Left Pac-Man} and 
 $\tilde{D}_{R}: =\{z=r e^{i\theta}, 0<r\leq R, -\pi+ \theta_0<\theta<\pi-\theta_0\}$ 
 a {\em Right Pac-Man}.
	\end{defn}

	\begin{lem}\label{lem4}
		There exists a Left Pac-Man $D_{R_0}$ such that $D_{R_0}\subsetneqq T\cap\mathcal{A}.$ 
	\end{lem}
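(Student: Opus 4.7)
The plan is to pass to parabolic coordinates $w = -1/z$, which conjugate $f(z) = z + z^2$ to $F(w) = w^2/(w-1) = w + 1 + \frac{1}{w-1}$, defined for $w \neq 1$. In these coordinates the attracting direction ${\bf v} = -1$ at the origin becomes $w \to +\infty$ along the positive real axis, and the parabolic basin $\mathcal{A}$ becomes the set of starting points $w_0$ whose $F$-orbit satisfies $w_n / n \to 1$. The Pac-Man $D_{R_0}$ corresponds under $z \mapsto -1/z$ to the unbounded region $\Sigma_N := \{w : |w| \geq N,\ |\arg w| < \pi - \theta_0\}$ with $N := 1/R_0$, namely the complement in $\{|w| \geq N\}$ of the sector of angular width $2\theta_0$ around the negative real axis. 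The lemma thus reduces to showing that for $N$ large enough (depending on $\theta_0$), every $w_0 \in \Sigma_N$ has its $F$-orbit eventually absorbed by the attracting half-plane $\Pi_M := \{\Re w > M\}$, which is forward-invariant under $F$ for any $M \geq 3$.

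The core estimates are $|F(w) - (w+1)| = 1/|w-1| \leq 1/|\Im w|$, which yield the coordinate-wise bounds $\Re F(w) \geq \Re w + 1 - 1/|\Im w|$ and $|\Im F(w) - \Im w| \leq 1/|\Im w|$. Given these, I proceed in three steps. Step one: if $\Re w_0 \geq M$, the orbit already lies in $\Pi_M$ and we are done. Step two: for $w_0$ in the complementary approach region $\Sigma_N \cap \{\Re w < M\}$, combine the angular restriction with $|w_0| \geq N$ and $\Re w_0 < M$ to obtain $h := |\Im w_0| \geq |w_0|\sin\theta_0$ provided $N$ is chosen at least $M/\cos\theta_0$. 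Step three: bootstrap. So long as $|\Im w_k| \geq h/2$ for all $k < n$, the accumulated vertical drift satisfies $|\Im w_n - \Im w_0| \leq \sum_k 2/h = 2n/h$, which stays below $h/2$ whenever $n \leq h^2/4$; during the same window $\Re w_n \geq \Re w_0 + n/2$, so the orbit enters $\Pi_M$ after at most $n_\ast := 2(M + |\Re w_0|)$ iterations. A final calibration chooses $N$ of order $1/\sin^2\theta_0$ so that $n_\ast \leq h^2/4$ holds uniformly on $\Sigma_N$, closing the bootstrap.

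The main obstacle is precisely this quantitative bootstrap. A point $w_0$ near the edge of $\Sigma_N$ can have $\Re w_0$ as negative as $-|w_0|\cos\theta_0$, so the approach phase may last on the order of $|w_0|$ iterations; during this long phase the small per-step vertical error $1/|\Im w_k|$ could a priori accumulate into an $O(1)$ change in $\Im w_k$, pushing the orbit down out of the horizontal strip where the approximation $F \approx \mathrm{id} + 1$ is reliable. The quadratic-in-$h$ cushion built into the bootstrap is what prevents this, but securing it forces $N$ to scale like $1/\sin^2\theta_0$. Once $D_{R_0} \subset \mathcal{A}$ is established, the strict inclusion $D_{R_0} \subsetneqq T \cap \mathcal{A}$ is automatic, since $T$ is an unbounded sector while $D_{R_0}$ is bounded; for any $R_0 < 1/2$, for instance, the point $-1/2 \in T \cap \mathcal{A}$ lies outside $D_{R_0}$.
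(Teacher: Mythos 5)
Your proof is correct, and it follows the same overall skeleton as the paper's argument: pass to the coordinate $w = -1/z$, observe that $F(w) = w + 1 + 1/(w-1)$, and show that every orbit started in the image of the Pac-Man eventually lands in a region where convergence is automatic. Where the two arguments genuinely differ is in how they tame the cumulative error term. The paper shrinks the Right Pac-Man $\tilde D_{1/r_0}$ so that the per-step error satisfies $|o(1)| < \theta_0/3$ outside it, and then runs a purely geometric cone argument: it takes tangent lines to $\tilde D_{1/r_0}$ at angle $\theta_0/2$ to the real axis, and since each step $\omega \mapsto \omega + 1 + o(1)$ has slope at most roughly $\theta_0/3 < \theta_0/2$, an orbit starting beyond those tangent lines can never drift back across them into $\tilde D_{1/r_0}$. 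Your argument instead works coordinatewise with the explicit estimate $|F(w)-(w+1)| \le 1/|\Im w|$, and bootstraps: as long as $|\Im w_k| \ge h/2$ the per-step vertical drift is $\le 2/h$, which over the $\lesssim h^2/4$ steps needed to push $\Re w$ above $M$ accumulates to less than $h/2$, so the induction closes once $N \gtrsim 1/\sin^2\theta_0$; a quick check shows the paper's tangent-line construction produces the same $1/\theta_0^2$ scaling. Your version has one concrete advantage: by explicitly funnelling orbits into the forward-invariant half-plane $\Pi_M$ and invoking Stolz--Ces\`aro, you get $w_n/n \to 1$, i.e.\ $z_n \sim -1/n$, which is exactly what membership in $\mathcal A = \mathcal A(0,-1)$ requires; the paper passes directly from ``the orbit never re-enters $\tilde D_{1/r_0}$'' to ``$f^n(z_0)\to 0$'' and leaves the identification of the attraction direction implicit. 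Your closing remark that the strict inclusion $D_{R_0}\subsetneqq T\cap\mathcal A$ is free (via the point $-1/2$) is correct; the paper does not address it separately.
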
 
	
	\begin{proof}
		We want to know how orbits go precisely near the parabolic fixed point at $0.$ Let $\omega=\phi(z)=-1/z$ send $0$ to $\infty,$ then the conjugated map has the expansion $$F(\omega)=\phi\circ f\circ\phi^{-1}(\omega)=\omega+1+o(1) ~~ \text{as} ~~ |\omega|\rightarrow\infty.$$
		And we have $l_1(l_2)$ is mapped to two new rays $l^\omega_1:=\{z=re^{i(\pi-\theta_0)}, r>0\}(l^\omega_2:=\{z=re^{i(-\pi+\theta_0)}\})$; 
		$T$ is mapped to $T'=\{r e^{i\theta}, r>0,-\pi+\theta_0\leq\theta\leq\pi-\theta_0\}$;
		the Left Pac-Man $D_{R}$ is mapped to $T'\setminus \tilde{D}_{\frac{1}{R}}$ for any radius $R$ (see Figure \ref{Figure2}).
\begin{figure}[!htb]
	\centering 
	\includegraphics[width=0.75\textwidth,height=0.25\textheight]{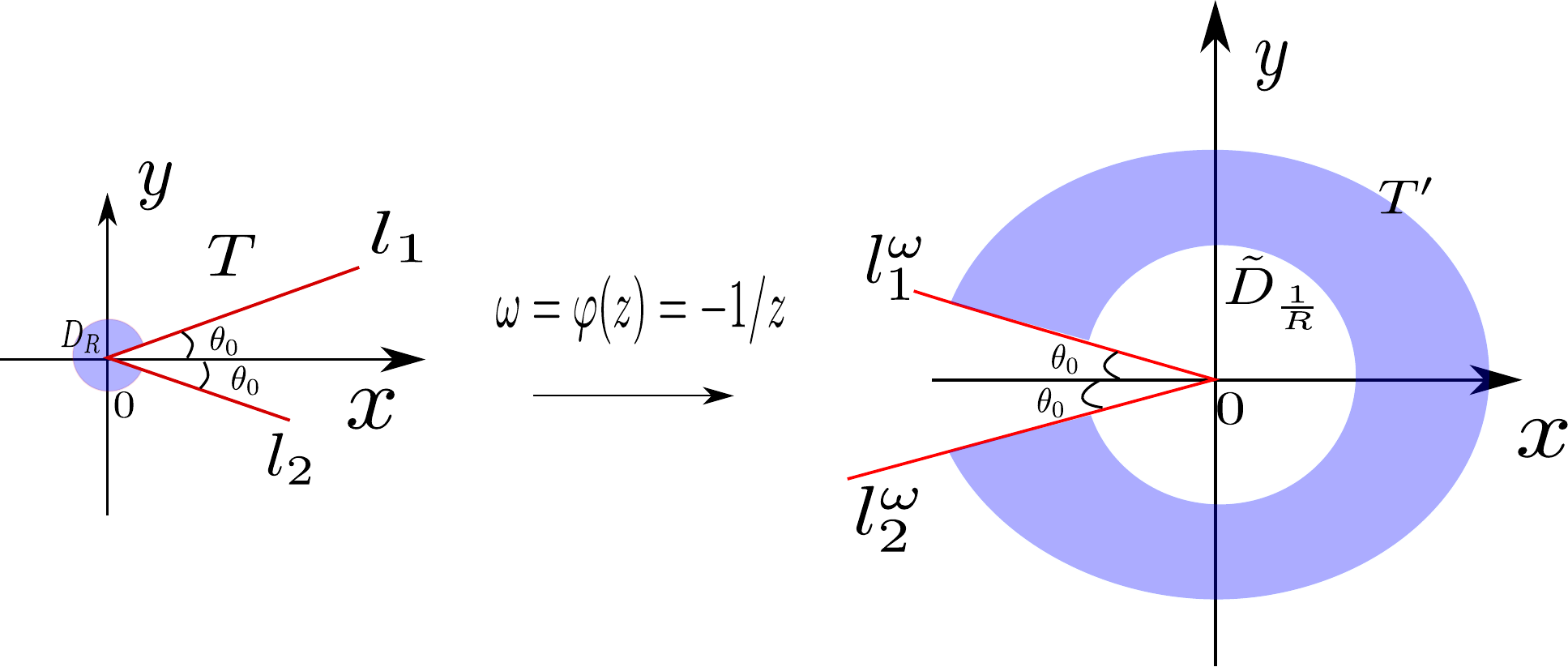}
	\caption{The image of a Left  Pac-Man}
	\label{Figure2}
\end{figure}

		We choose a Right Pac-Man $\tilde{D}_{\frac{1}{r_0}}$ such that for any $\omega \in T'\setminus \tilde{D}_{\frac{1}{r_0}},$ we have $|o(1)|< \frac{\theta_0}{3}$. Note here, actually, $r_0=r_0(\theta_0)$ should be sufficiently small.  
		Then we draw the upper tangent line $L_0$ of $\tilde{D}_{\frac{1}{r_0}}$ such that the angle  between $L_0$ and the real axis is $\frac{\theta_0}{2}$. Then $L_0$ will intersect $l_1^\omega$ and the real axis, we denote these two intersect points $A_0$ and $B_0,$ respectively. Let $\frac{1}{r}=\max\{|OA_0|, |OB_0|\}>\frac{1}{r_0}$, here $O$ is the origin zero, then we choose the Right Pac-Man $\tilde{D}_{\frac{1}{r}}$ (see Figure \ref{Figure3}).
		\begin{figure}[!htb]
			\centering 
			\includegraphics[width=0.4\textwidth,height=0.3\textheight]{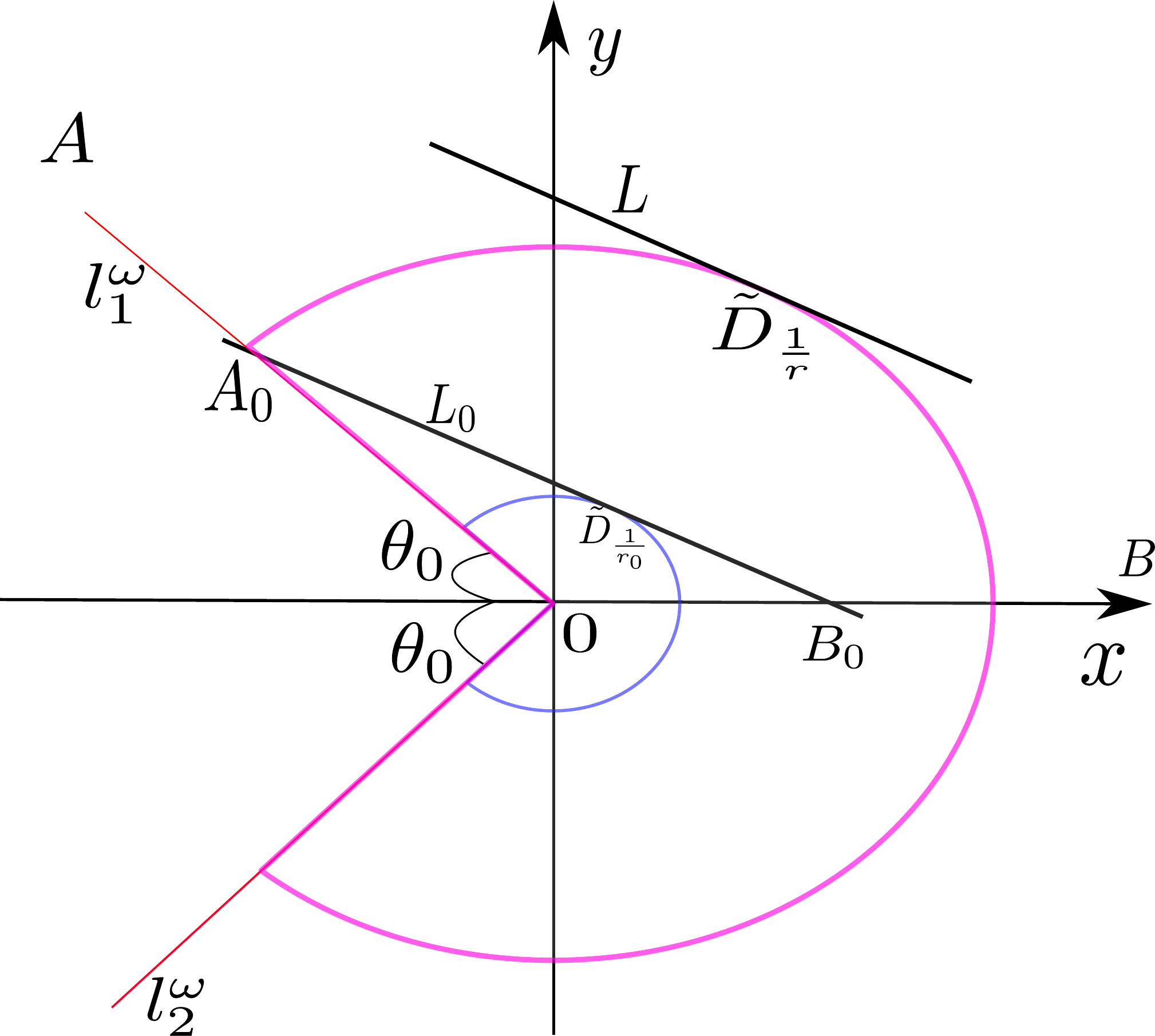}
			\caption{ The choice of the Right Pac-Man}
			\label{Figure3}
		\end{figure} 
	
		If we take any $\omega_0 \in T'\setminus \tilde{D}_{\frac{1}{r}}$, $F^n(\omega_0)=\omega_0+n+o(1)$ for all positive integers $n$ such that $|F^n(\omega_0)|\geq|\omega_0+n-n\frac{\theta_0}{3}|$,
		then we know that $F^n(\omega_0)$ will never go inside $\tilde{D}_{\frac{1}{r_0}}.$ 
		
		Therefore, let $R_0=r, \tilde{R}_0=r_0,$ then we have for any $z_0\in D_{R_0}, f^n(z_0)\rightarrow 0,$ hence $D_{R_0}\subsetneqq T\cap\mathcal{A},$ and we also know that $f^n(D_{R_0})\subseteq D_{\tilde{R}_0}.$ 

	\end{proof}

	\begin{lem}\label{lem5}
		We can choose a Left Pac-Man $D_{R'_0}\subseteq D_{R_0}$ such that $f^n(D_{R'_0})\subseteq D_{R_0}$ for all $n=1, 2,\dots$  
			\end{lem}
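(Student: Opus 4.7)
The plan is to repeat the tangent line construction of Lemma \ref{lem4}, but with the target Right Pac-Man $\tilde{D}_{1/r_0}$ replaced by the larger Right Pac-Man $\tilde{D}_{1/R_0}$. Recall that $R_0 = r < r_0 = \tilde{R}_0$, so $1/R_0 > 1/r_0$ and thus $\tilde{D}_{1/r_0} \subsetneq \tilde{D}_{1/R_0}$. Working in the $\omega$-coordinate $\omega = \phi(z) = -1/z$ introduced in Lemma \ref{lem4}, the desired conclusion $f^n(D_{R'_0}) \subseteq D_{R_0}$ for all $n \geq 1$ becomes $F^n(T' \setminus \tilde{D}_{1/R'_0}) \subseteq T' \setminus \tilde{D}_{1/R_0}$. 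Since $T' \setminus \tilde{D}_{1/R_0} \subsetneq T' \setminus \tilde{D}_{1/r_0}$, the error estimate $|F(\omega) - \omega - 1| < \theta_0/3$ established in Lemma \ref{lem4} continues to hold throughout the smaller complement.

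I would then carry out the same tangent-line construction as in Lemma \ref{lem4}, but relative to the larger Pac-Man: draw the upper tangent line $L'_0$ to $\tilde{D}_{1/R_0}$ making angle $\theta_0/2$ with the positive real axis, let $A'_0$ and $B'_0$ denote its intersections with $l_1^\omega$ and the real axis respectively, and set $1/R'_0 := \max\{|OA'_0|, |OB'_0|\}$. A symmetric lower tangent line handles the lower half-plane. Since both $|OA'_0|$ and $|OB'_0|$ exceed $1/R_0$, we obtain $1/R'_0 > 1/R_0$, equivalently $R'_0 < R_0$, which immediately gives the required inclusion $D_{R'_0} \subseteq D_{R_0}$.

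For the forward invariance, I would proceed by induction on $n$. The base case is immediate from $T' \setminus \tilde{D}_{1/R'_0} \subset T' \setminus \tilde{D}_{1/R_0}$. For the inductive step, if $F^n(\omega_0) \in T' \setminus \tilde{D}_{1/R_0}$, then $F^{n+1}(\omega_0) = F^n(\omega_0) + 1 + \epsilon_n$ with $|\epsilon_n| < \theta_0/3$. Because the per-step angular drift $\theta_0/3$ is strictly less than the angular margin $\theta_0/2$ built into the slope of $L'_0$, an orbit starting past $L'_0$ (i.e.\ in $T' \setminus \tilde{D}_{1/R'_0}$) translates forward predominantly in the $+1$ direction and can never cross $L'_0$ into $\tilde{D}_{1/R_0}$. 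Pulling back through $\phi^{-1}$ yields $f^n(D_{R'_0}) \subseteq D_{R_0}$ for every $n \geq 1$.

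The main obstacle is making the geometric slope-comparison genuinely inductive, that is, checking that the cumulative angular error does not outpace the tangent-line margin along the orbit. This is precisely the mechanism already used in Lemma \ref{lem4}, and since the new tangent line $L'_0$ has exactly the same slope $\tan(\theta_0/2)$ as $L_0$ while the error bound $\theta_0/3$ is unchanged, the same argument transfers verbatim.
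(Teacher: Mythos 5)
Your proposal takes essentially the same route as the paper: in the $\omega$-coordinate, draw the tangent line at slope $\theta_0/2$ to the larger Right Pac-Man $\tilde{D}_{1/R_0}$ (rather than to $\tilde{D}_{1/r_0}$ as in Lemma~\ref{lem4}), use its intersections with $l_1^\omega$ and the real axis to define $1/R'_0$, and invoke the same $\theta_0/3$-versus-$\theta_0/2$ slope comparison to prevent any forward orbit from crossing into $\tilde{D}_{1/R_0}$. Your version is actually a bit cleaner than the paper's (which writes $R'_0=1/r'$ where it clearly intends $R'_0=r'$, and says ``tangent line of $\tilde{D}_{R_0}$'' where $\tilde{D}_{1/R_0}$ is meant), and your explicit mention of the symmetric lower tangent line and the inductive structure fills in details the paper leaves implicit.
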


		\begin{proof}
			On the procedure for proving Lemma \ref{lem4}, we can draw another upper tangent line $L$ of $\tilde{D}_{R_0}$ such that the angle between $L$ and the real axis is $\frac{\theta_0}{2}$. Then $L$ will also intersect $l_1^\omega$ and the real axis, we denote these two intersect points $A$ and $B$ respectively. Let $\frac{1}{r'}=\max\{|OA|, |OB|\}$, then we choose the Right Pac-Man $\tilde{D}_{\frac{1}{r'}}.$	
			If we take any $\omega \in T'\setminus \tilde{D}_{\frac{1}{r'}}$, we know that $F^n(\omega)$ will never go inside of $\tilde{D}_{\frac{1}{r}}.$ 
			Hence, let $R'_0=\frac{1}{r'},$ we have $f^n(D_{R'_0})\subseteq D_{R_0}.$

		\end{proof}

	
	We continue with the proof of Theorem A. The idea of the proof is to find a point $z_0\in D_\epsilon\setminus\R^-$ such that for any $\tilde{q}\in Q,$ we have $d_{\mathcal {A}}(z_0, \tilde{q})\geq C$. 
	
	Now, we will consider the following cases of $\tilde{q}$ inside three subsets of $\mathcal{A}$ (see Figure \ref{Figure5}):

	Case 1: $\tilde{q}\in \big((T\cap \mathcal{A})\setminus D_{R'_0}\big)\setminus\R^-.$ Let $d^1_{\mathcal{A}}(z_0, \tilde{z})$ be the Kobayashi distance between $z_0$ and any point $\tilde{z}\in \partial D_{R'_0}\setminus R^-$ (see the blue curve in Figure \ref{Figure5}). Then we prove that $d^1_{\mathcal{A}}(z_0, \tilde{z})\geq C$.

	Case 2: $\tilde{q} \in\R^-.$ Let $d^2_{\mathcal{A}}(z_0, z')$ be the Kobayashi distance between $z_0$ and any point $z'\in \R^-\cap\mathcal{A}$ (see the pink curve in Figure \ref{Figure5}). Then we prove that $d^2_{\mathcal{A}}(z_0, z')\geq C$.
	
	Case 3: $\tilde{q} \in \mathcal{A}\cap \big\{S':=\{z=re^{i\theta}, r>0, 0<\theta\leq\theta_0\}\big\}.$
	 Let $d^3_{\mathcal{A}} (z_0, \hat{z})$ be the Kobayashi distance between $z_0$ and any point $\hat{z}\in l_1$ (see the green curve in Figure \ref{Figure5}). Then we prove that $d^3_{\mathcal{A}}(z_0, \hat{z})\geq C$.
\begin{figure}[!htb]
	\centering 
	\includegraphics[width=0.6\textwidth,height=0.4\textheight]{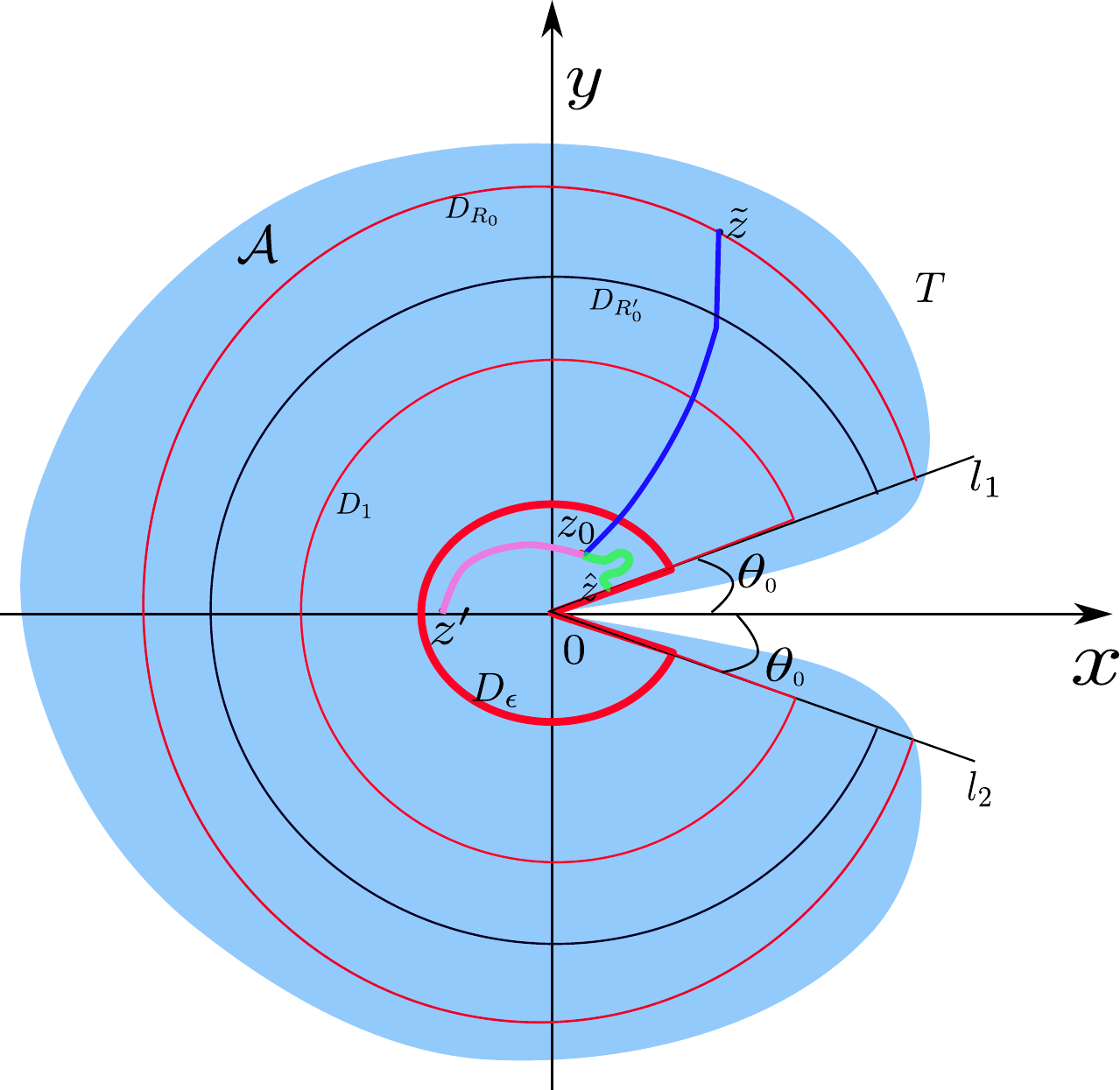}
	\caption{The three Kobayashi distances in $\mathcal{A}$}
	\label{Figure5}
\end{figure} 

{\bf Remark: }

1) In the investigation of these three cases, it will become clear how small $\theta_0$ needs to be.

	2) $\partial D_{R'_0}$ means the boundary of the Left Pac-Man $D_{R'_0}.$ It is the circular curve of $D_{R'_0},$ not including the mouth of $D_{R'_0}$ which belongs to $l_1$ and $l_2;$
	
	3) $\tilde{q}$ can never be inside $D_{R'_0}\setminus\R^-:$  If $\tilde{q} \in D_{R'_0}\setminus\R^-,$ more precisely, suppose $f^{-l}(f^k(q))\in D_{R'_0}\setminus\R^-$ for some integers $l, k\geq0,$ we iterate $l$ times of $f^{-l}(f^k(q))$, we have $f^k(q)\in D_{R_0}\setminus\R^-$ since we know $f^n(D_{R'_0})\subseteq D_{R_0}$ for any positive integer $n$ by Lemma \ref{lem4} and $f^k(q)\notin\R^-$ since $R'_0<R_0=r<r_0$, and $f$ is biholomorphic near $0.$ Hence $\Im(f^k(q))\neq0$. However, this  contradicts $\Im (f^k(q))=\Im(f^k(-\frac{1}{2}))=0.$ Thus $\tilde{q}\notin D_{R'_0}\setminus\R^-;$ 
	
	4)  if $\tilde{q}\in \mathcal{A}\setminus D_{R'_0},$ $\tilde{q}$ is far away from $\partial D_{R'_0}$, then we know that $d_{\mathcal{A}}(z_0, \tilde{q})\geq d^1_{\mathcal{A}}(z_0, \tilde{z})$. This is because  $d^1_{\mathcal{A}}(z_0, \tilde{z})$ is the minimum distance between $z_0$ and any $\tilde{q}\in \big((T\cap \mathcal{A})\setminus D_{R'_0}\big)\setminus\R^-.$

	Hence, we need to prove that we can choose $z_0$ so that all these three Kobayashi distances	
	$d^1_{\mathcal{A}}(z_0, \tilde{z}), d^2_{\mathcal{A}}(z_0, z'),  d^3_{\mathcal{A}}(z_0, \hat{z})\geq C$ for the given constant $C$. Next, we will estimate these three Kobayashi distances.

	First, we estimate $d^1_{\mathcal{A}}(z_0, \tilde{z}).$ Suppose $z_0\in D_\epsilon,$ 
	\begin{equation*}
		\begin{aligned}
			d^1_{\mathcal{A}}(z_0, \tilde{z})
			\geq &d_{\mathbb C\setminus\R^+}(z_0, \tilde{z}) \\
			=&\inf\int_{\gamma(t)}F_{\mathbb C\setminus\R^+}(\gamma(t))|\gamma'(t)|dt\\
			=&\inf\int_{\gamma(t)}\frac{1}{2|\gamma(t)|\sin\frac{\arg(\gamma(t))}{2}}|\gamma'(t)|dt\\
			\geq& \inf\int_{\gamma(t)}\frac{1}{2|\gamma(t)|}|\gamma'(t)|dt\\
			=&\frac{1}{2}\inf\int_{\epsilon}^{R'_0}\frac{|dr|}{r}\\
			\geq&\frac{1}{2}(\ln R'_0-\ln \epsilon),
		\end{aligned}	
	\end{equation*}
	where $\gamma(t)$ is a smooth path joining $z_0$ to $\tilde{z}$. The last inequality holds since there might have some derivatives of the path $\gamma(t)$ are negative in some pieces.
	In addition, we can see that 
	$ d^1_{\mathcal{A}}(z_0, \tilde{z})\rightarrow\infty$ as $\epsilon\rightarrow0.$

	Second, we estimate $d^2_{\mathcal{A}}(z_0, z')$. Let $\epsilon=\epsilon_0$, i.e., fix $\epsilon,$ and let $D_1\subset T$ be a scaling of $D_{\epsilon_0}$ by $S(z)=\frac{z}{|z_0|},$ sending $z_0, z'$ to $\tilde{z}_0:=\frac{z_0}{|z_0|}, \frac{z'}{|z_0|}$, respectively. By homogeneity, we know the Kobayashi distance  $d_{\mathbb{C}\setminus\R^+}(z_0, z')=d_{\mathbb{C}\setminus\R^+}(\tilde{z}_0, z'/|z_0|)$. Since we hope to prove $d^2_{\mathcal{A}}(z_0, z')\geq C$, we need $z_0$ to be far from $\mathbb{R}^-$, and so does $\tilde{z}_0$. Let $S_T:=\{z=e^{i\theta},  \theta_0<\theta<\frac{\pi}{2}-\theta_0\}$, assume $\tilde{z}_0\in S_T$ and $\Re \tilde{z}_0>\frac{1}{2}$, then any curve from $\tilde{z}_0$ to $\frac{z'}{|z_0|}$ must pass through a point $\tilde{z}'$ on the positive imaginary axis, i.e., $\Re \tilde{z}'=0$.  
	For simplicity, we assume this curve and $\tilde{z}'$ lie in the upper half plane. Hence $d^2_{\mathcal{A}}(z_0,   z')\geq d_{\mathbb{C}\setminus\R^+}(z_0, z')=d_{\mathbb{C}\setminus\R^+}(\tilde{z}_0, z'/|z_0|)\geq d_{\mathbb{C}\setminus\R^+}(\tilde{z}_0, \tilde{z}').$ 
	We have
	\begin{equation*}
		\begin{aligned}
			d^2_{\mathcal{A}}(z_0, z')
			&\geq d_{\mathbb C\setminus\R^+}(\tilde{z}_0, \tilde{z}')\\
			&=\inf\int_{\tilde{z}_0}^{\tilde{z}'}F_{\mathbb C\setminus\R^+}(z)\\
			&=\inf\int_{\tilde{z}_0}^{\tilde{z}'}\frac{|dz|}{|z|2\sin(\theta/2)}\\
			&\geq 	\sqrt{2}\inf\int_{\tilde{z}_0}^{\tilde{z}'}\frac{|dz|}{|z|\sin\theta}\\
			&\geq 	\sqrt{2}\inf\int_{\tilde{z}_0}^{\tilde{z}'}\frac{|dz|}{\Im z}\\
			&>	\sqrt{2}|\ln (\Im \tilde{z}')-\ln (\Im \tilde{z}_0)|.\\
		\end{aligned}
	\end{equation*}
	
Then there are three situations for the Kobayashi distance between $\tilde{z}_0$ and $\tilde{z}':$	

1) If $ \Im \tilde{z}'\geq e^C|\Im \tilde{z}_0|$ or $\Im \tilde{z}'\leq \frac{|\Im \tilde{z}_0|}{e^C}$ for the constant $C>0$ (see the blue curves on Figure \ref{Figure4}), then $|\ln (\Im \tilde{z}')-\ln (\Im \tilde{z}_0)|\geq C,$ hence $d^2_{\mathcal{A}}(z_0, z')\geq C$ is true. 
\begin{figure}[!htb]
	\centering 
	\includegraphics[width=0.4\textwidth,height=0.2\textheight]{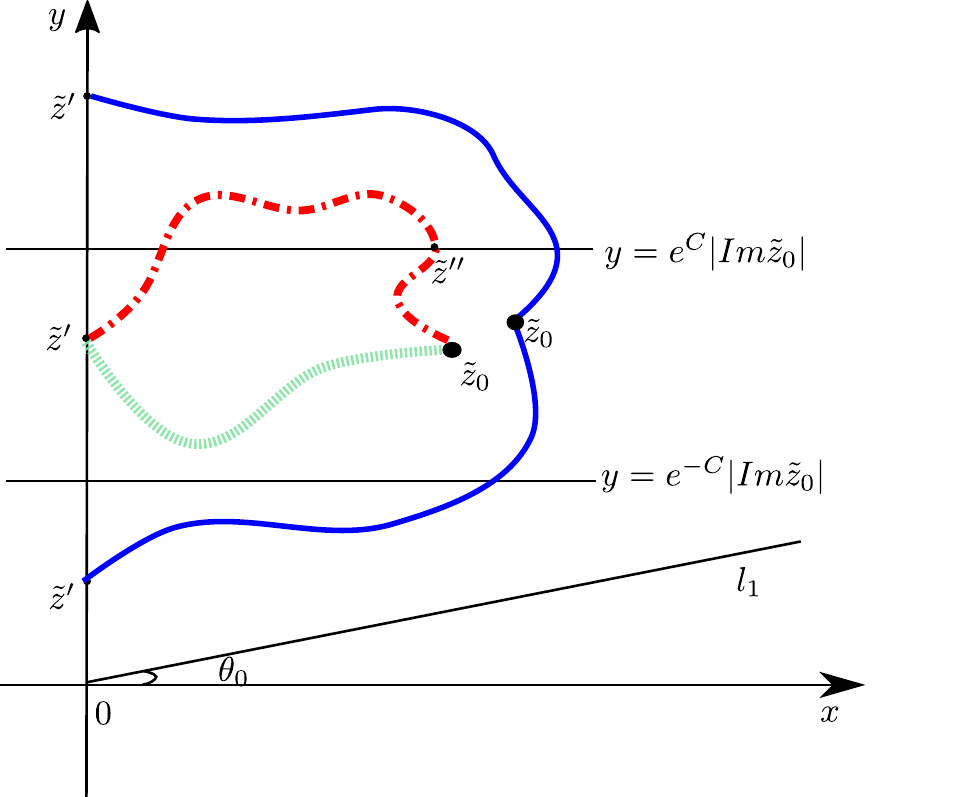}
	\caption{Three situations for the Kobayashi distance between $\tilde{z}_0$ and $\tilde{z}'$	 }
	\label{Figure4}
\end{figure}

2) lf  $\tilde{z}'\in\mathcal{L}:=\{z=x+iy, \frac{|\Im \tilde{z}_0|}{e^C}<y<e^C|\Im \tilde{z}_0|\}$ (see the green curve on Figure \ref{Figure4}). We prove that $d_{\mathcal{A}}(\tilde{z}_0, \tilde{z}')\geq \sqrt{2}\inf\int_{\tilde{z}_0}^{\tilde{z}'}\frac{|dz|}{\Im z} \geq C$ for $z\in\mathcal{L}$. 

	Let $z=x+iy\in \mathcal{L}$, 
	then $\Im z\leq e^C|\Im \tilde{z}_0|$. Hence we have  
	$$\int_{\tilde{z}_0}^{\tilde{z}'}\frac{|dz|}{\Im z}
	\geq\int_{\tilde{z}_0}^{\tilde{z}'} \frac{|dx|}{\Im z}\geq \int_{\tilde{z}_0}^{\tilde{z}'}\frac{|dx|}{e^C|\Im \tilde{z}_0| }=\frac{\Re \tilde{z}_0}{e^C}\frac{1}{|\Im \tilde{z}_0|}>\frac{1}{2e^C |\Im \tilde{z}_0|}.$$
	Note that the last inequality holds because we choose $\Re  \tilde{z}_0>1/2.$
	
	As long as $\theta_0<\frac{1}{2Ce^C}$, we can choose $\tilde{z}_0$ so that $\Im \tilde{z}_0<\frac{1}{2Ce^C}$, hence $\int_{\tilde{z}_0}^{\tilde{z}'}\frac{|dz|}{\Im z}>C.$
	
	3) If $\tilde{z}'\in  \mathcal{L}, $ but the curve $\gamma$ between $\tilde{z}_0$ and $\tilde{z}'$ gets outside of $\mathcal{L}$ starting at some point $\tilde{z}''\in \gamma\cap\mathcal{L}$  for a while (see the red curve in Figure \ref{Figure4}), then enter back to $\mathcal{L}$ again, then we still have $d^2_{\mathcal{A}}(z_0, z')\geq C$ is true because $d_{\mathbb{C}\setminus\R^+}(z_0, z')\geq d_{\mathbb{C}\setminus\R^+}(\tilde{z}_0, \tilde{z}'')\geq C.$ The last inequality holds since 1) is valid.

After these calculations, we fix $z_0$ so that $d^1_\mathcal{A}(z_0, \tilde{z})$ and $d^2_{\mathcal{A}}(z_0, z')$ are both bigger or equal to $C.$ To obtain $d^3_{\mathcal{A}}(z_0, \hat{z})\geq C,$ we will need an even smaller $\theta'_0,$ see the following calculation.
	
	We know  $d^3_{\mathcal{A}}(z_0, \hat{z})\geq d_{\mathbb{C}\setminus\R^+}(z_0, \hat{z})=d_{\mathbb{C}\setminus\R^+}(\tilde{z}_0, \hat{z}/|z_0|).$ Hence to have $d^3_{\mathcal{A}}(z_0, \hat{z})\geq C,$ we need to estimate $d_{\mathbb{C}\setminus\R^+}(\tilde{z}_0, \hat{z}/|z_0|).$

	When we estimate $d^2_{\mathcal{A}}(z_0, z')$, we send $z_0$ to $\tilde{z}_0$ and choose $\Re \tilde{z}_0>1/2$ and $\tilde{z}$ close to $l_1.$ 
	Hence $d_{\mathbb{C}\setminus\R^+}(\tilde{z}_0, \hat{z}/|z_0|)$ might be very small. To handle this situation, we first choose a disk $\Delta^K_{\mathbb{C}\setminus\R^+} (\tilde{z}_0, C)$ centered at $\tilde{z}_0$ with Kobayashi radius $C$. 
	Since this disk $\Delta^K_{\mathbb{C}\setminus\R^+} (\tilde{z}_0, C)$ is a compact subset of $\mathbb{C}\setminus\R^+$, there exists a sector $S'':=\{z=re^{i\theta}, r>0, 0<\theta<\theta'_0\}$ such that $S''\cap\Delta^K_{\mathbb{C}\setminus\R^+} (\tilde{z}_0, C)=\emptyset$, here we can assume that $\theta'_0<\theta_0.$ 
	Therefore, $d_{\mathbb{C}\setminus\R^+}(\tilde{z}_0, \hat{z}/|z_0|)\geq C$ for any $\hat{z}/|z_0|\in \mathcal{A}\cap S''.$ 
	Furthermore, for this new $\theta'_0,$ it does not change the conclusion of the estimation of $d^1_{\mathcal{A}}(z_0, \tilde{z})$ and $d^2_{\mathcal{A}}(z_0, z').$

	Therefore, there is a point $z_0$ such that all these three distances  	$d^1_{\mathcal{A}}(z_0, \tilde{z}), d^2_{\mathcal{A}}(z_0, z'),  d^3_{\mathcal{A}}(z_0, \hat{z})\geq C$ for the given constant $C.$  
	
\end{proof}

\subsection{Dynamics inside the parabolic basin of $f(z)=z+az^{m+1}, m\geq1, a\neq0$}\label{subsec2}
In this subsection, we generalize Theorem A to the case of several petals inside the parabolic basin. Let us recall the statement of our main Theorem B:
\begin{thmB}\label{the6}
Let $f(z)=z+az^{m+1}, m\geq1, a\neq0$, and $\Omega_j$ be the immediate basin of $\mathcal{A}_j.$  
We choose an arbitrary constant $C>0$ and a point $q\in{\bf v_j}\cap\mathcal{P}_j$. Then there exists a point $z_0\in \Omega_j$ so that for any $\tilde{q}\in Q:= (\cup_{l=0}^{\infty}f^{-l}(f^k(q)))\cap\Omega_j~(l, k$ are non-negative integers), the Kobayashi distance $d_{\Omega_j}(z_0, \tilde{q})\geq C$, where $d_{\Omega_j}$ is the Kobayashi metric. 
\end{thmB}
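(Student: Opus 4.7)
The plan is to reduce Theorem B to Theorem A by a conformal conjugation that collapses the $m$-fold petal structure of $f(z)=z+az^{m+1}$ to a single-petal situation. First, I would verify that the immediate basin $\Omega_j$ is contained in the open sector $S_j$ of angular opening $2\pi/m$ bounded by the two repulsion rays from $0$ adjacent to $\mathbf{v}_j$. Indeed, for any repulsion vector $\mathbf{w}$ (i.e.\ $ma\mathbf{w}^m=1$) and $t>0$, a direct computation gives $f(t\mathbf{w})=(t+t^{m+1}/m)\mathbf{w}$, so iterates stay on the repulsion ray and escape to $\infty$. Hence the $m$ closed repulsion rays lie in the basin of $\infty$; since $\Omega_j$ is connected and meets the petal $\mathcal{P}_j$, it must avoid these rays and therefore lie in the single sector $S_j$.

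Next, apply the biholomorphism $\phi\colon S_j\to\mathbb{C}\setminus\mathbb{R}^+$ defined by $\phi(z)=maz^m$, using the single-valued branch of $z^m$ on $S_j$; a short check confirms that $\phi$ sends both boundary rays of $S_j$ to $\mathbb{R}^+$ and the attraction ray $\{t\mathbf{v}_j:t>0\}$ to $\mathbb{R}^-$. A direct calculation yields
\[
g(v):=\phi\circ f\circ\phi^{-1}(v)=v\bigl(1+\tfrac{v}{m}\bigr)^{m}=v+v^2+O(v^3).
\]
Thus $\tilde\Omega:=\phi(\Omega_j)\subset\mathbb{C}\setminus\mathbb{R}^+$ is the immediate basin of $g$ at the parabolic fixed point $0$ with attraction vector $-1$; the distinguished point $\phi(q)$ and all its forward $g$-iterates lie on $\mathbb{R}^-$, and $\phi(q)$ plays exactly the role played by $-\tfrac{1}{2}$ in Theorem A.

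Inspecting the proof of Theorem A, the only properties of the map $z+z^2$ actually used are: (i) the basin lies in $\mathbb{C}\setminus\mathbb{R}^+$, on which the Kobayashi metric has an explicit formula; (ii) under $\omega=-1/v$ the conjugate $G(\omega)=-1/g(-1/\omega)$ has the form $\omega+1+o(1)$ as $|\omega|\to\infty$; (iii) the orbit of the distinguished point stays on $\mathbb{R}^-$; and (iv) the map is biholomorphic in a neighborhood of $0$. All four continue to hold for $g$, so the Left/Right Pac-Man construction (the analogues of Lemmas~\ref{lem4} and \ref{lem5}) can be carried out inside $\tilde\Omega$, and the three-case Kobayashi distance estimates transfer without change. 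This produces $v_0\in\tilde\Omega$ such that $d_{\mathbb{C}\setminus\mathbb{R}^+}(v_0,\phi(\tilde q))\geq C$ for every $\tilde q\in Q\cap\Omega_j$.

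To conclude, set $z_0:=\phi^{-1}(v_0)\in\Omega_j$. Since $\phi$ is a biholomorphism between $S_j$ and $\mathbb{C}\setminus\mathbb{R}^+$ and $\Omega_j\subset S_j$, Proposition~\ref{pro2} and Corollary~\ref{cor3} give
\[
d_{\Omega_j}(z_0,\tilde q)\geq d_{S_j}(z_0,\tilde q)=d_{\mathbb{C}\setminus\mathbb{R}^+}(v_0,\phi(\tilde q))\geq C,
\]
the desired bound. I expect the main work to be the first step, namely the confirmation $\Omega_j\subset S_j$, since it requires knowing that no orbit in $\Omega_j$ crosses a repulsion ray; once this is in hand the several-petal subtleties have been absorbed into the change of coordinates and Theorem A can essentially be cited as a black box, with the extra higher-order term $O(v^3)$ in $g$ playing no qualitative role since only the asymptotic $G(\omega)=\omega+1+o(1)$ is actually used in the Pac-Man argument.
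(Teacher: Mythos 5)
Your proposal is correct, and it takes a genuinely different (and in some ways cleaner) route than the paper's. The paper works \emph{directly} in the sector $S=\{re^{i\theta}:r>0,\ 0<\theta<2\pi/m\}$: it uses $\phi(z)=z^{m/2}$ only to compute the explicit Kobayashi density $F_S=\tfrac{m}{2r\sin(m\theta/2)}|dz|$, and then repeats the entire three-case distance analysis of Theorem A with this density, along with the analogous ``$m$-Pac-Man'' lemmas. You instead conjugate the \emph{dynamics} by $\phi(z)=maz^m$, observe that $g=\phi\circ f\circ\phi^{-1}=v+v^2+O(v^3)$ and that the pushed-forward basin $\phi(\Omega_j)$ sits inside $\mathbb{C}\setminus\mathbb{R}^+$ with $\phi(q),\phi(f(q)),\dots$ on $\mathbb{R}^-$, and then invoke the \emph{proof} (not statement) of Theorem A for $g$. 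Since every lower bound in Theorem A's proof is in fact a lower bound on $d_{\mathbb{C}\setminus\mathbb{R}^+}$, and since the Pac-Man lemmas only use the asymptotic $G(\omega)=\omega+1+o(1)$, the transfer is legitimate. Your approach buys modularity and makes the ``$m$-petal to one-petal'' reduction transparent; the paper's approach is more self-contained but re-derives essentially the same estimates. One genuine improvement in your write-up: you actually justify the containment $\Omega_j\subset S_j$ by noting that the two bounding repulsion rays are forward-invariant with orbits escaping to $\infty$, so a connected basin component converging to $0$ cannot cross them. The paper uses the corresponding containment $d_{\Omega_j}\geq d_S$ without comment. Two small things you should be more careful about in a final write-up: (i) $\phi(\Omega_j)$ need not literally equal the immediate basin of $g$ (it is only contained in it), but this does no harm since all your estimates are lower bounds on $d_{\mathbb{C}\setminus\mathbb{R}^+}$, not distances intrinsic to $g$'s basin; (ii) the domain of the conjugacy is only $S_j$, so one should note that $\Omega_j$ is forward-invariant and contained in $S_j$ before writing $g^l\circ\phi=\phi\circ f^l$ on $\Omega_j$.
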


\begin{proof}

	Let $S:=\{z=re^{i\theta}, r>0, 0<\theta<\frac{2\pi}{m}\}$ be the sector with angle $2\pi/m$, including the attracting petal $\mathcal{P}_j$, then the angle between ${\bf v_j}$ and $\R^+$ is $\frac{\pi}{m}$. 
	We denote the boundary rays of $S$ by $l^1_{\R^+}:=\{z=r>0\}$ and $l^2_{\R^+}:=\{z=re^{i\frac{2\pi}{m}}, r>0\}$. 
	We choose $\theta_0>0$ and two rays $l_1:=\{z=r e^{i \theta_0}, r>0\}, l_2:=\{z=r e^{i (\frac{2\pi}{m}-\theta_0)}, r>0\}$. Then we denote by $T:=\{z=r e^{i \theta}, r>0, \theta_0<\theta<\frac{2\pi}{m}-\theta_0\}$ a sector inside $S.$ 
	
	Let $\phi(z): S\rightarrow \mathbb H$ with $\phi(z)=z^{m/2}$. Then the Kobayashi metric $F_{S}$ is $$F_S=\frac{|\phi'|}{\Im \phi}|dz|=\frac{m}{2r\sin(\frac{\theta m}{2})}|dz|.$$
	As in the proof of Theorem A, 
	we can choose two analogous ''Pac-Man'' $D^m_{R'_0}:=\{z=re^{i\theta}, 0<r<R'_0, \theta_0<\theta<\frac{2\pi}{m}-\theta_0\}, D^m_{R_0}:=\{z=re^{i\theta}, 0<r<R_0, \theta_0<\theta<\frac{2\pi}{m}-\theta_0\}$ central at $0$ with radius $R'_0, R_0 >0 ( R'_0< R_0)$, respectively, such that $D^m_{R_0} \subsetneqq T\cap \Omega_j$ and $f^n(D^m_{R'_0})\subset D^m_{R_0}.$ 
	Then similarly, we need to estimate the three Kobayashi distances from $z_0$ to any point $\tilde{z}\in\partial D^m_{R_0}$ (see the blue curve in Figure \ref{Figure6}), $z'\in {\bf v_j}$ (see the pink curve in Figure \ref{Figure6}),  and $\hat{z}\in\Omega_j\cap \big\{S':=\{z=re^{i\theta}, r>0, 0<\theta<\theta_0\}\big\}$ (see the green curve in Figure \ref{Figure6}),  and show that all of them are not less than $C$.
\begin{figure}[!htb]
	\centering 
	\includegraphics[width=0.5\textwidth,height=0.3\textheight]{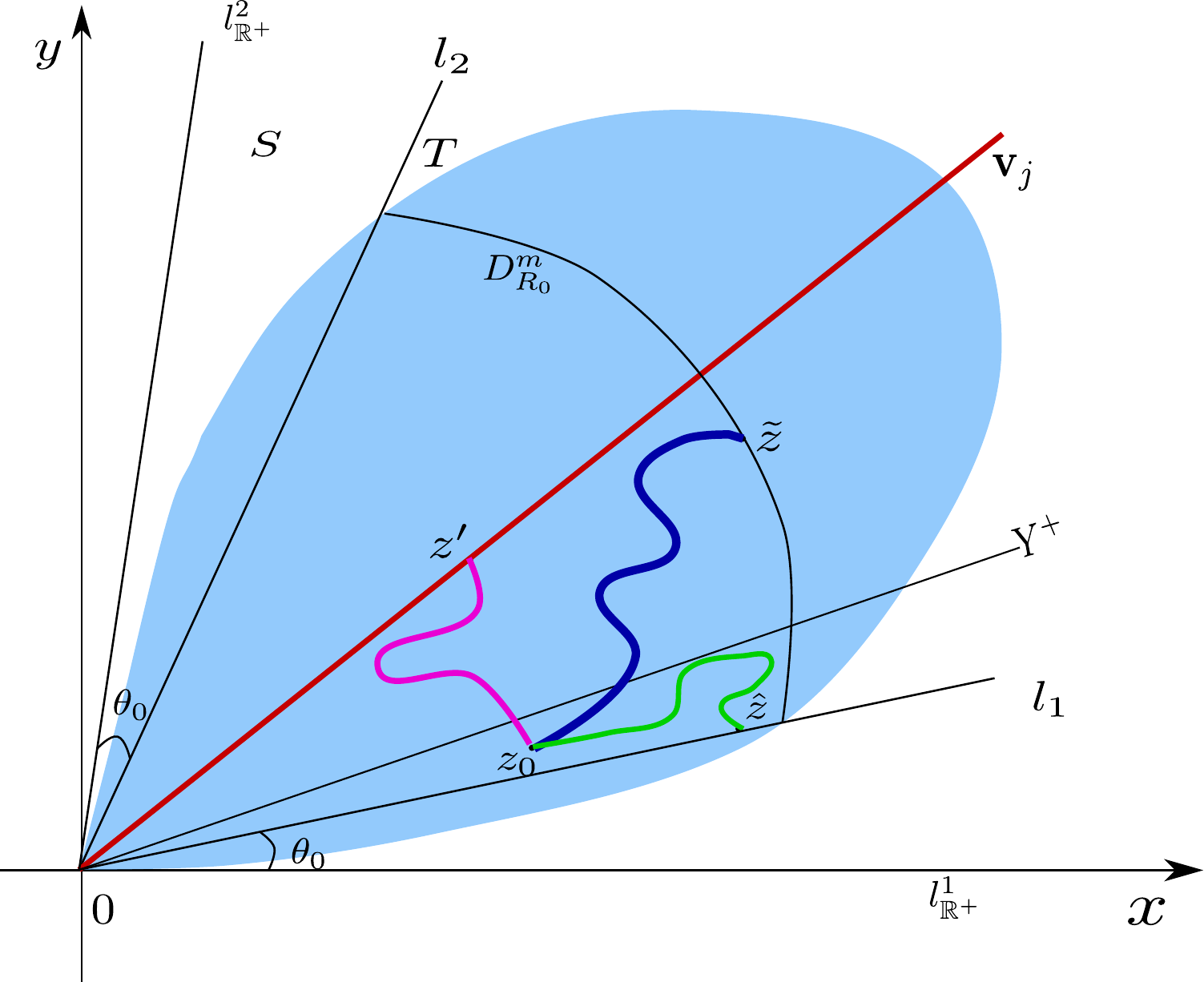}
	\caption{ The three Kobayashi distances in $\Omega_j$}
	\label{Figure6}
\end{figure}

	First, suppose $z_0\in D^m_\epsilon:=\{z=re^{i\theta}, 0<r<\epsilon, \theta_0<\theta<\frac{2\pi}{m}-\theta_0\}, \epsilon<<R'_0.$ Let us estimate the Kobayashi distance from $z_0$ to any point $\tilde{z}$ on the boundary of $D^m_{R_0}$, and we denote this distance by $d^1_{\Omega_j}(z_0, \tilde{z}).$
	
	\begin{equation*}
		\begin{aligned}
			d^1_{\Omega_j}(z_0, \tilde{z})
			&\geq d_{S}(z_0, \tilde{z})
			=\inf\int_{\gamma(t)}F_{S}(\gamma(t))|\gamma'(t)|dt\\
			&=\inf\int_{\gamma(t)}\frac{m}{2|\gamma(t)|\sin\frac{m\arg (\gamma(t))}{2}}|\gamma'(t)|dt\geq \inf\int_{\gamma(t)}\frac{m}{2|\gamma(t)|}|\gamma'(t)|dt\\
			&=\frac{m}{2}\inf\int_{\epsilon}^{R_0}\frac{|dr|}{r}\geq\frac{m}{2}(\ln R_0-\ln \epsilon),
		\end{aligned}	
	\end{equation*}
	where $\gamma(t)$ is a smooth path joining $z_0$ to $\tilde{z}$. 
	In addition, we can see that 
	$ d^1_{\Omega_j}(z_0, \tilde{z})\rightarrow\infty$ as $\epsilon\rightarrow0.$
	
	Second, we calculate the Kobayashi distance from $z_0$ to any point $z'\in {\bf v_j}$ denoted by $d^2_{\Omega_j}(z_0, z')$. Let $\epsilon=\epsilon_0$, i.e., fix $\epsilon,$ and $D^m_1\subset T$ be a scaling of $D^m_{\epsilon_0}$ by $S(z)=\frac{z}{|z_0|}$, sending $z_0, z'$ to $\tilde{z}_0:=\frac{z_0}{|z_0|}, \frac{z'}{|z_0|}$, respectively.
	By homogeneity, we know the Kobayashi distance  $d_{S}(z_0, z')=d_{S}(\tilde{z}_0, z'/|z_0|)$. 
	Since we hope to prove $d^2_{\Omega_j}(z_0, z')>C$, we need $z_0$ to be far from ${\bf v_j}$, and so does $\tilde{z}_0$. 
	Let $S_T:=\{z=e^{i\theta},  \theta_0<\theta<\frac{\pi}{2m}-\theta_0\}$, assume $\tilde{z}_0\in S_T$ and $\Re \tilde{z}_0$ is sufficiently big, then any curve from $\tilde{z}_0$ to $\frac{z'}{|z_0|}$ must pass through a point $\tilde{z}'$ on the ray $Y^+:=\{re^{i \frac{\pi}{2m}}, r>0\}.$
	Hence $d^2_{\Omega_j}(z_0,   z')\geq d_{S}(z_0, z')=d_{S}(\tilde{z}_0, z'/|z_0|)\geq d_{S}(\tilde{z}_0, \tilde{z}').$ Then we have
	\begin{equation*}
		\begin{aligned}
			d^2_{\Omega_j}(z_0, z')
			&\geq d_{S}(\tilde{z}_0, \tilde{z}')
			=\inf\int_{\tilde{z}_0}^{\tilde{z}'}F_{S}(z)\\
			&= \inf\int_{\tilde{z}_0}^{\tilde{z}'}\frac{m|dz|}{2r\sin(m\theta/2)}\\ 
			&= \inf\int_{\tilde{z}_0}^{\tilde{z}'}\frac{\frac{m\theta}{2}}{\sin(\frac{m\theta}{2})}\cdot\frac{m|dz|}{2r(m\theta/2)}\\
			&= c_1\inf\int_{\tilde{z}_0}^{\tilde{z}'}\frac{|dz|}{r\theta}\\
			&= c_1\inf\int_{\tilde{z}_0}^{\tilde{z}'}\frac{\sin\theta}{\theta}\cdot\frac{|dz|}{r\sin\theta}\\
			&=c_1 c_2\inf\int_{\tilde{z}_0}^{\tilde{z}'}\frac{|dz|}{r\sin\theta}\\
			&=c_1 c_2\inf\int_{\tilde{z}_0}^{\tilde{z}'}\frac{|dz|}{\Im z}\\
			&\geq c_1c_2| \ln (\Im \tilde{z}')-\ln (\Im \tilde{z}_0)|.\\
		\end{aligned}	
	\end{equation*}
where $c_1:=\inf\frac{\frac{m\theta}{2}}{\sin \frac{m\theta}{2}},  c_2:=\inf\frac{\sin\theta}{\theta}.$

Then there are three situations for the Kobayashi distance between $\tilde{z}_0$ and $\tilde{z}':$	

1) If $ \Im \tilde{z}'\geq e^C|\Im \tilde{z}_0|$ or $\Im \tilde{z}'\leq \frac{|\Im \tilde{z}_0|}{e^C}$ for some constant $C>1$, then $|\ln (\Im \tilde{z}')-\ln (\Im \tilde{z}_0)|\geq C,$ hence $d^2_{\Omega_j}(z_0, z')\geq C$ is true. 

2) lf  $\tilde{z}'\in\mathcal{L}:=\{z=x+iy, \frac{|\Im \tilde{z}_0|}{e^C}<y<e^C|\Im \tilde{z}_0|\}$. We need to prove that $d_{\Omega_j}(\tilde{z}_0, \tilde{z}')\geq c_1c_2\inf\int_{\tilde{z}_0}^{\tilde{z}'}\frac{|dz|}{\Im z} \geq C$
for $z\in\mathcal{L}$. 

Let $z=x+iy\in \mathcal{L}$, 
then $\Im z\leq e^C|\Im \tilde{z}_0|$. Hence we have  
$$\int_{\tilde{z}_0}^{\tilde{z}'}\frac{|dz|}{\Im z}
\geq\int_{\tilde{z}_0}^{\tilde{z}'} \frac{|dx|}{\Im z}\geq \int_{\tilde{z}_0}^{\tilde{z}'}\frac{|dx|}{e^C|\Im \tilde{z}_0| }=\frac{|\Re\tilde{z}'-\Re \tilde{z}_0|}{e^C}\frac{1}{|\Im \tilde{z}_0|}>\frac{1}{2e^C |\Im \tilde{z}_0|},$$
Note that the last inequality holds since we choose $\Re  \tilde{z}_0$ sufficiently big so that $\tilde{z}_0$ is close to $l_1$, and $|\Re\tilde{z}'-\Re \tilde{z}_0|>1/2$ because $\tilde{z}'$ will have to be close to $0$ since it lies on $Y^+\cap\mathcal{L}$ and its imaginary part is close to $0$, which makes $d^2_{\Omega_j}(z_0, z')$ as big as we want.

In other words, as long as  $\theta_0<\frac{1}{2Ce^C}$, we have $\Im \tilde{z}_0<\frac{1}{2Ce^C}$. In addition, $|\Re\tilde{z}'-\Re \tilde{z}_0|>1/2$, we obtain  $\int_{\tilde{z}_0}^{\tilde{z}'}\frac{|dz|}{\Im z}>C.$

3) If $\tilde{z}'\in  \mathcal{L}, $ but the curve $\gamma$ between $\tilde{z}_0$ and $\tilde{z}'$ get outside of $\mathcal{L}$ starting at some point $\tilde{z}''\in \gamma\cap\mathcal{L}$  for a while, then enter back to $\mathcal{L}$ again, then we still have $d^2_{\Omega_j}(z_0, z')\geq C$ is true because $d_S(z_0, z')\geq d_S(\tilde{z}_0, \tilde{z}'')\geq C.$ The last inequality holds since 1) is valid. We have a conclusion as same as  $d^2_{\mathcal{A}}(z_0, z')\geq C$ in the proof of Theorem A.

		At last, we estimate the Kobayashi distance from $z_0$ to any point  $\hat{z}\in\Omega_j\cap \big\{S':=\{z=re^{i\theta}, r>0, 0<\theta<\theta_0\}\big\}$ denoted by $d^3_{\Omega_j}(z_0, \hat{z}).$ We know  $d^3_{\Omega_j}(z_0, \hat{z})\geq d_S(z_0, \hat{z})=d_S(\tilde{z}_0, \hat{z}/|z_0|).$

	We use the method for computing $d^3_{\Omega_j}(z_0, \hat{z})$ as same as $d^3_{\mathcal{A}}(z_0, \hat{z})\geq C$ in the proof of Theorem A. We first choose a disk $\Delta^K_{S} (\tilde{z}_0, C)$ centered at $\tilde{z}_0$ with Kobayashi radius $C$. 
	Since this disk $\Delta^K_{S} (\tilde{z}_0, C)$ is a compact subset of $S$, there exists a sector $S'':=\{z=re^{i\theta}, r>0, \pi-\frac{\pi}{m}<\theta<\pi-\frac{\pi}{m}+\theta'_0\}$ such that $S''\cap\Delta^K_{S} (\tilde{z}_0, C)=\emptyset$, here we can assume that $\theta'_0<\theta_0.$ Therefore, $d_{S}(\tilde{z}_0, \hat{z}/|z_0|)\geq C$ for any $\hat{z}/|z_0|\in \Omega_j\cap S''.$ 
	Furthermore, for this new $\theta'_0,$ it does not change the conclusion of the estimation of $d^1_{\Omega_j}(z_0, \tilde{z})$ and $d^2_{\Omega_j}(z_0, z').$

\end{proof}

	\subsection{Dynamics inside the parabolic basin of $f(z)=z+az^{m+1}+(\text{higher terms}), m\geq1, a\neq0$}\label{subsec4}
	
	Finally, in this subsection, we sketch how to handle the behavior of orbits inside parabolic basins of general polynomials. Let us recall the statement of our main Theorem C:
	
	\begin{thmC}\label{the7}
	Let $f(z)=z+az^{m+1}+(\text{higher terms}), m\geq1, a\neq0,$ and $\Omega_j$ be the immediate basin of $\mathcal{A}_j.$ We choose an arbitrary constant $C>0$ and a point $q\in{\bf v_j}\cap\mathcal{P}_j$. Then there exists a point $z_0\in \Omega_j$ so that for any $\tilde{q}\in Q:= (\cup_{l=0}^{\infty}f^{-l}(f^k(q)))\cap\Omega_j~ (l, k$ are non-negative integers), the Kobayashi distance $d_{\Omega_j}(z_0, \tilde{q})\geq C$, where $d_{\Omega_j}$ is the Kobayashi metric. 
	\end{thmC}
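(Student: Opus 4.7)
The plan is to reduce Theorem C to Theorem B by exploiting the fact that higher-order terms do not alter the local geometry of the parabolic flower around the fixed point. The three-step structure of the previous proofs---construction of invariant Pac-Man-like regions, followed by estimation of the three Kobayashi distances $d^1_{\Omega_j}, d^2_{\Omega_j}, d^3_{\Omega_j}$---is preserved, with the errors contributed by the higher-order terms absorbed into constants.

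First I would set up the Leau--Fatou change of coordinates $\omega = -1/(m a z^m)$. For the pure polynomial $z + a z^{m+1}$ it conjugates $f$ to $F(\omega) = \omega + 1 + o(1)$ as $|\omega| \to \infty$ on the half-plane associated with the attraction vector ${\bf v_j}$; for $f(z) = z + a z^{m+1} + (\text{higher terms})$ the same change of coordinates yields
\[
F(\omega) = \omega + 1 + O\!\bigl(|\omega|^{-1/m}\bigr),
\]
so the error still tends to zero at infinity. This is enough to repeat the proofs of Lemmas \ref{lem4} and \ref{lem5} verbatim and obtain two Pac-Man regions $D^m_{R_0'} \subsetneq D^m_{R_0} \subsetneq T \cap \Omega_j$ with $f^n(D^m_{R_0'}) \subseteq D^m_{R_0}$ for every $n \geq 1$. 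The sector $T$ and the enveloping sector $S$ containing the petal $\mathcal{P}_j$ are defined exactly as in Theorem B, possibly after a harmless rotation that aligns ${\bf v_j}$ with the negative real axis.

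Next I would observe that the three Kobayashi distance bounds of Theorem B carry over without change. The estimates for $d^1_{\Omega_j}(z_0, \tilde z)$, $d^2_{\Omega_j}(z_0, z')$, and $d^3_{\Omega_j}(z_0, \hat z)$ rely only on the inclusion $\Omega_j \subseteq S$ (which gives $d_{\Omega_j} \geq d_S$ by Corollary \ref{cor3}) together with the explicit formula $F_S = \tfrac{m}{2r \sin(m\theta/2)}|dz|$. Neither ingredient depends on the coefficients of $f$ beyond $m$ and the leading $a$, so the computations transfer verbatim and select a point $z_0 \in D^m_\epsilon$ realizing all three bounds simultaneously.

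The remaining ingredient is the \emph{forbidden tube} argument ruling out $\tilde q \in D^m_{R_0'}$ close to the orbit of $q$. In Theorem A this exploited the exact invariance of $\R^-$ under $f(z) = z + z^2$; with higher-order terms ${\bf v_j}$ need not be invariant, so I would replace the ray ${\bf v_j}$ by a tubular neighborhood $\mathcal{N}_\delta$ of the forward orbit $\{f^k(q)\}_{k \geq 0}$ inside $\mathcal{P}_j$, chosen thin enough that $\mathcal{N}_\delta \cap D^m_{R_0'}$ can be removed from the Pac-Man by slightly shrinking $\theta_0$. The iteration argument is then identical to the one in Theorem A: any preimage $f^{-l}(f^k(q)) \in D^m_{R_0'} \setminus \mathcal{N}_\delta$ would, after $l$ applications of $f$, land in $D^m_{R_0} \setminus \mathcal{N}_\delta$, contradicting $f^k(q) \in \mathcal{N}_\delta$. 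The hardest step is the compatibility between $\mathcal{N}_\delta$ and the Pac-Man decomposition: in the polynomial case $z + a z^{m+1}$ the attraction directions are invariant rays, so the decomposition is automatic; here one must invoke Lemma \ref{lem12}, which gives $z_k \sim {\bf v_j}/\sqrt[m]{k}$, to show that $\{f^k(q)\}$ stays asymptotically tangent to ${\bf v_j}$ fast enough that $\mathcal{N}_\delta$ can be carved out of $T \cap \Omega_j$ without disturbing the three Kobayashi distance estimates.
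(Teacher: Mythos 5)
The central gap is the assertion that the Kobayashi estimates from Theorem~B ``rely only on the inclusion $\Omega_j\subseteq S$'' and hence ``transfer verbatim.'' For the pure polynomial $f(z)=z+az^{m+1}$ the immediate basin does sit inside the open sector $S$ of angle $2\pi/m$, because the repelling directions are genuinely invariant rays. Once higher-order terms are present this fails: $\partial\Omega_j$ is a fractal curve through the parabolic point, and $\Omega_j$ may well contain points with argument in $(-\theta_0,\theta_0)$ near the repelling direction, so $\Omega_j\not\subseteq S$ in general. The paper's proof of Theorem~C is built precisely around this obstacle. It replaces the slit plane/sector by the double-sheeted domain $V_R=\{re^{i\theta}: 0<r<R,\ -\theta_0<\theta<2\pi+\theta_0\}$, and the indispensable new ingredient is Proposition~\ref{pro3}: the two components $S_1,S_2$ of $\mathcal{A}\cap\bar D_R$ touching $l_1$ and $l_2$ are \emph{disjoint}, proved by observing that otherwise $\mathcal{A}$ would contain a loop of winding number $1$ around $0$ and, by the maximum principle, a full neighborhood of $0$ --- contradicting parabolicity. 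This disjointness is what allows the basin near $0$ to be unfolded into $V_R$, yielding $d_{\Omega_j}\geq d_{V_R}$ there; $V_R$ is then taken to the upper half-plane by $\phi(z)=e^{i\psi}z^{c/2}$ with $c$ close to $1$, not by $z\mapsto z^{m/2}$. Without this step you have no lower bound to push through, so the proposal as written does not prove the theorem.

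A secondary issue concerns your tubular-neighborhood $\mathcal N_\delta$ replacement for the invariant ray $\mathbf{v}_j$. You are right that $\mathbf{v}_j$ is no longer $f$-invariant when higher-order terms appear, but the contradiction argument from Remark~3 of Theorem~A requires that a point in $D^m_{R_0'}\setminus\mathbf{v}_j$ (resp.\ $\setminus\mathcal N_\delta$) stay off $\mathbf{v}_j$ (resp.\ $\mathcal N_\delta$) under forward iteration until it reaches $f^k(q)$. For the ray this follows from $f$ being univalent near $0$ and real-preserving; a generic tube around the orbit of $q$ has no such backward/forward invariance inside the Pac-Man, so the ``land in $D^m_{R_0}\setminus\mathcal N_\delta$, contradicting $f^k(q)\in\mathcal N_\delta$'' step does not follow. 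One would need to build $\mathcal N_\delta$ with an invariance property (e.g.\ as the image of a vertical strip in the Fatou coordinate) rather than an arbitrary metric tube; the paper does not spell this out, but your sketch does not supply it either.
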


The proof of Theorem B needs to be adjusted. To simplify the discussion, we consider the case $m=1:$
$$f(z)=z+z^2+(\text{higher terms}).$$

When there are no higher order terms, the crucial estimate of the Kobayashi metric comes from the fact that the parabolic basin is contained in $\mathbb{C}\setminus\R^+.$ Hence we could compare it with the Kobayashi metric on $\mathbb{C}\setminus\R^+.$

In the case of higher order terms, the parabolic basin might be more complicated. However, we can, instead of $\mathbb{C}\setminus\mathbb{R}^+$, use the double sheeted domain 
$$V_R:=\{z=re^{i\theta}, 0<r<R, -\theta_0<\theta<2\pi+\theta_0\}.$$

Next, we investigate the properties of $V_R$ to explain why we choose the double sheeted domain $V_R$ as above.

\begin{prop}\label{pro3}
	Let $\bar{D}_R:=\{z=re^{i\theta}, 0<r<R, -\theta_0<\theta<\theta_0\}$, $\mathcal{A}$ be the whole basin of $f, S_1 $ be the connected component of $\mathcal{A}\cap\bar{D}_R$ which contains $\{z=re^{i\theta_0}, 0<r<R\}$, and $S_2 $ be the connected component of $\mathcal{A}\cap\bar{D}_R$ which contains $\{z=re^{-i\theta_0}, 0<r<R\}.$ Then any two pieces $S_1, S_2 $ (see the left of Figure \ref{Figure7}) are disjoint in $\bar{D}_R$. 
\end{prop}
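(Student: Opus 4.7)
The plan is to exhibit a closed set $K\subset\bar{D}_R\setminus\mathcal{A}$ with $0\in\overline{K}$ such that $\bar{D}_R\setminus K$ splits into two disjoint open pieces, one whose closure meets the upper ray $\{re^{i\theta_0}\}$ and one whose closure meets the lower ray $\{re^{-i\theta_0}\}$. Once such a $K$ is produced, the inclusion $\mathcal{A}\cap\bar{D}_R\subset\bar{D}_R\setminus K$ forces $S_1$ and $S_2$ into those two distinct pieces, giving $S_1\cap S_2=\emptyset$.

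The model case $f(z)=z+z^2$ already contains the mechanism. Take $K=\R^+\cap(0,R)$. For real $x\in(0,R)$, $f(x)=x+x^2>x$, so $\{f^n(x)\}$ is strictly increasing and diverges to $+\infty$; hence $x\notin\mathcal{A}$ and $K\subset\bar{D}_R\setminus\mathcal{A}$. Removing this radial segment from the open sector cuts $\bar{D}_R$ into the two half-sectors $\{0<\theta<\theta_0\}$ and $\{-\theta_0<\theta<0\}$, yielding the required separation.

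For general $f(z)=z+z^2+(\text{higher terms})$ with possibly complex coefficients, $\R^+$ need not be $f$-invariant and the segment may intersect $\mathcal{A}$, so $K$ must be constructed differently. I would use the local parabolic structure at $0$: by the Leau--Fatou flower theorem (see Milnor \cite{RefM}), the immediate basin $\Omega_j$ accesses $0$ only through the attraction direction $-1$, so $\partial\Omega_j\subset J(f)\subset\mathcal{A}^c$ accumulates at $0$ tangent to the repulsion direction $\R^+$. Using the repelling Fatou coordinate $\Phi^-$, which conjugates $f$ to $w\mapsto w+1$ on a left half-plane, one pulls back a horizontal line $\{\Im w=c\}$ to obtain a real-analytic $f$-invariant arc $\Gamma$ tangent to $\R^+$ at $0$. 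Because $\Phi^-(f^n(z))=\Phi^-(z)+n$ strictly increases in real part, orbits on $\Gamma$ cannot accumulate at $0$ in the petal, so $\Gamma\subset\mathcal{A}^c$. Shrinking $R$ and $\theta_0$ so that $\Gamma$ and its forward iterates stay inside $\bar{D}_R$ and reach the outer arc $\{|z|=R\}$ then produces the desired $K$.

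The main obstacle is the rigorous local-to-global extension of $\Gamma$ out to $\{|z|=R\}$, together with the verification that the extension remains in $\mathcal{A}^c$ throughout. A cleaner alternative is to take $K$ to be an external ray of the basin of $\infty$ landing at $0$ tangentially to $\R^+$, whose existence at a parabolic fixed point is guaranteed by the Douady--Hubbard landing theorem; this bypasses the extension step entirely at the cost of invoking external-ray theory for polynomials.
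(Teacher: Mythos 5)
Your strategy — producing a barrier set $K\subset\bar{D}_R\setminus\mathcal{A}$ that disconnects the ray $\{re^{i\theta_0}\}$ from $\{re^{-i\theta_0}\}$ — is genuinely different from the paper's and, as you yourself flag, incomplete. The paper instead argues by contradiction: if some $z\in S_1\cap S_2$ existed, one joins $z$ to $l_1$ inside $S_1$, joins $z$ to $l_2$ inside $S_2$, and closes up through the Left Pac-Man $D_R\subset\mathcal{A}$, producing a closed curve $C\subset\mathcal{A}$ with winding number $1$ about $0$; the maximum principle applied to the iterates $f^n$ on the region bounded by $C$ then forces $f^n\to0$ on a full neighborhood of $0$, so $0$ would be attracting, which is absurd. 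That route uses only the petal inclusion $D_R\subset\mathcal{A}$ and elementary topology, with no Fatou coordinates or external rays.

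The concrete gap in your construction is the inference ``$\Phi^-(f^n(z))=\Phi^-(z)+n$ increases, so orbits on $\Gamma$ cannot accumulate at $0$ in the petal, hence $\Gamma\subset\mathcal{A}^c$.'' That step is not valid: the displayed property holds for \emph{every} point of the repelling petal, and every such orbit leaves the repelling petal, yet the repelling petal meets $\mathcal{A}$ in a large set. Indeed, for $f(z)=z+z^2$ the complement of $\mathcal{A}$ inside the repelling petal is only the thin cusp along $\R^+$; a point such as $0.001+0.0001i$ leaves the repelling petal, swings around, and eventually converges to $0$ from the attracting direction, so it lies in $\mathcal{A}$. Hence the pullback of a generic horizontal line $\{\Im w=c\}$, $c\neq0$, lies \emph{inside} $\mathcal{A}$, not in its complement. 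What you would actually need is the exceptional $F$-invariant curve separating the basin near $0$, together with proof that it exists, is a single graph in some normalization of $\Phi^-$, and survives arbitrary higher-order perturbations of $f$ — none of which follows from the argument given. Add to this the extension problem you already identified (propagating $\Gamma$ out to $|z|=R$ while staying in $\mathcal{A}^c$), and the Fatou-coordinate construction as written does not close.

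Your external-ray fallback is sound in principle: an external ray landing at $0$ in the repelling gap between consecutive attracting petals lies in the basin of $\infty\subset\mathcal{A}^c$, providing the barrier directly and sidestepping the extension issue. But this imports the landing theory for parabolic points of polynomials (and the Goldberg--Milnor/Kiwi refinements if $J(f)$ is disconnected), and you would still have to verify that a ray in the correct access class lands at $0$. Compared with the paper's short winding-number argument, this is heavy machinery; if you want a barrier proof, the external-ray version is the one to develop, with those citations and the access-class verification made explicit.
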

\begin{proof}
	\begin{figure}[!htb]
		\centering 
		\includegraphics[width=0.85\textwidth,height=0.2\textheight]{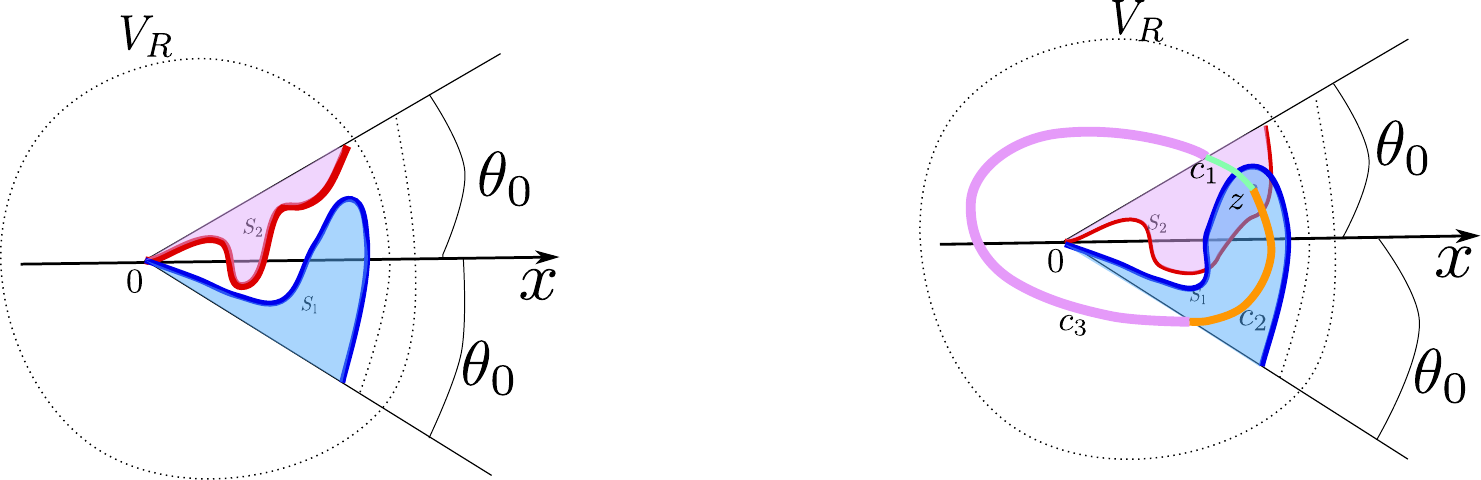}
		\caption{ Two pieces $S_1$ and $S_2 $ in $\bar{D}_R$}
		\label{Figure7}
	\end{figure} 
We know that, inside $V_R$ and near the origin, $\mathcal{A}$ contains the Left Pac-Man $D_{R}:=\{z=re^{i\theta}, 0<r<R, \theta_0<\theta<2\pi-\theta_0\}.$

If $S_1$ intersects $S_2$, then there is a point $z\in S_1\cap S_2$. We can draw three curves, $c_1$ from $z$ to $l_1:=\{z=re^{i\theta_0}\}$, $c_2$ from $z$ to $l_2:=\{z=re^{-i\theta_0}\}$, and $c_3\in D_R$ which connect $c_1$ and $c_2$. Hence $\mathcal{A}$ contains a closed curve $C:=c_1+c_2+c_3$ with the winding number $1$ around the origin (see the right of Figure \ref{Figure7}). 

We know that $f^n(z)\rightarrow0$ when $z\in C,$ since $C\in\mathcal{A}.$ In addition, by the maximum principle, we have $f^n(z)\rightarrow0$ when $z$ is inside the domain bounded by $C$. Hence $\mathcal{A}$ contains a neighborhood of $0,$ then $0$ is an attracting fixed point. However, this contradicts that $0$ is a parabolic fixed point of $f$.

\end{proof}
By Proposition \ref{pro3}, we can use the Kobayashi metric on $V_R$ instead of $\mathbb{C}\setminus\R^+$.

First, we know that $V_R$ can be mapped to a sector $S:=\{z=re^{i\theta}, r>0, \theta_1<\theta<\theta_2, \theta_1<<\theta_2<\pi/2\}$ by $\phi_1(z)=z^{c/2}$ when $c$ is sufficiently small.
Second, we can change $c$ such that $\theta_2=\pi-\theta_1$ by some map $\phi_2(z).$  At last, by some rotation map $\phi_3$, we can map $S$ to the upper half plane $\mathbb{H}$ (see the Figure \ref{Figure8}). 
\begin{figure}[!htb]
	\centering 
	\includegraphics[width=0.75\textwidth,height=0.2\textheight]{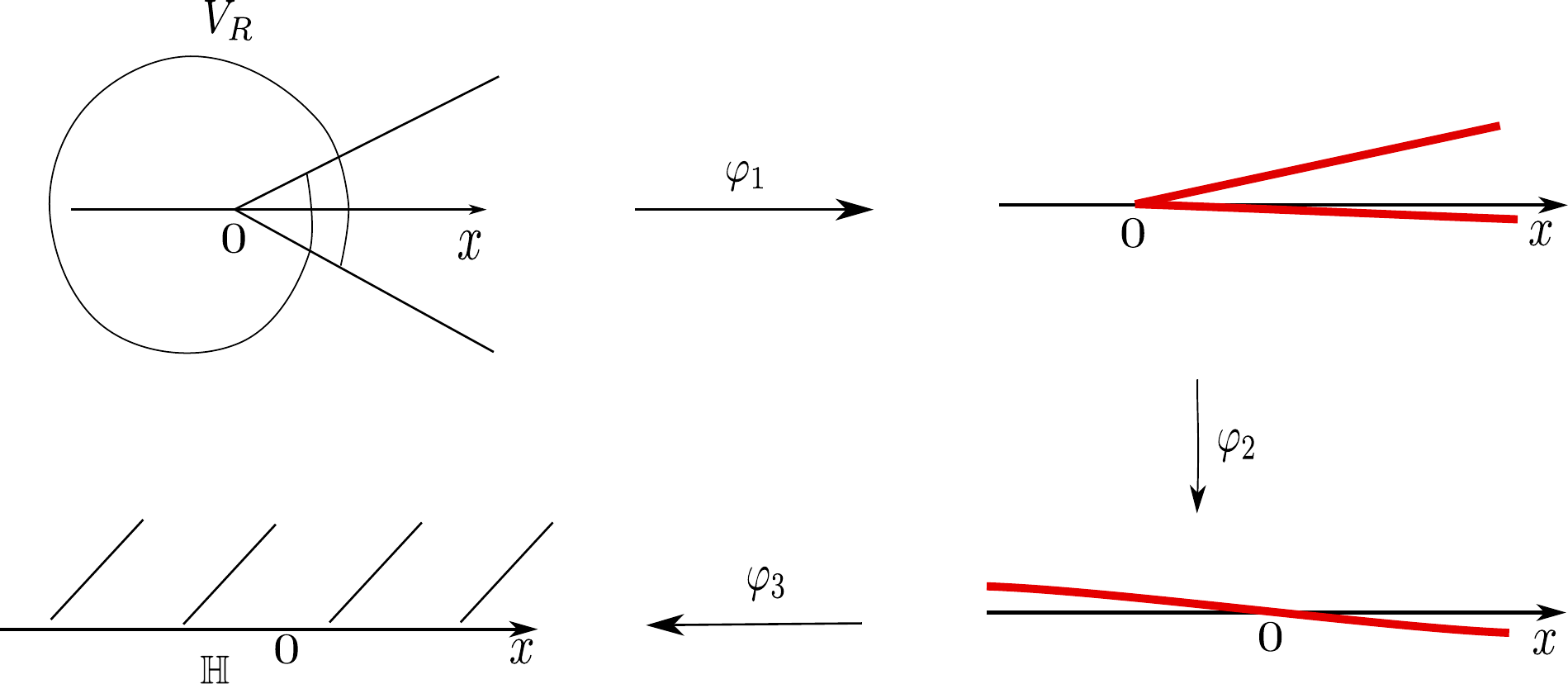}
	\caption{The maps $\phi_1, \phi_2$ and $\phi_3$}
	\label{Figure8}
\end{figure} 

Therefore, the map $\phi(z):=\phi_3\circ\phi_2\circ\phi_1$ from $V$ to the upper half plane $\mathbb{H}$ becomes $\phi(z)=e^{i\psi}z^{\frac{c}{2}}$, instead of $\phi(z)=z^{1/2}$, where $c$ is very close to $1$. Then, with the above setting, the rest of the estimation goes through as in Theorem B. 

If $m>1,$ it is difficult to draw the specific parabolic basins of $f$ or the attracting petals. Hence it is difficult to calculate the Kobayashi metric on the parabolic basin. Here we sketch the idea of how to prove this theorem for $m>1.$ We use Figure \ref{Figure9} to illustrate how we can choose 
 $V_R$ (see the domain with pink curves as its argument), and similarly, we have the same properties of $S_1, S_2$ as in Proposition \ref{pro3} (see the red curve and blue curve on Figure \ref{Figure9}). 
\begin{figure}[!htb]
	\centering 
	\includegraphics[width=0.75\textwidth,height=0.25\textheight]{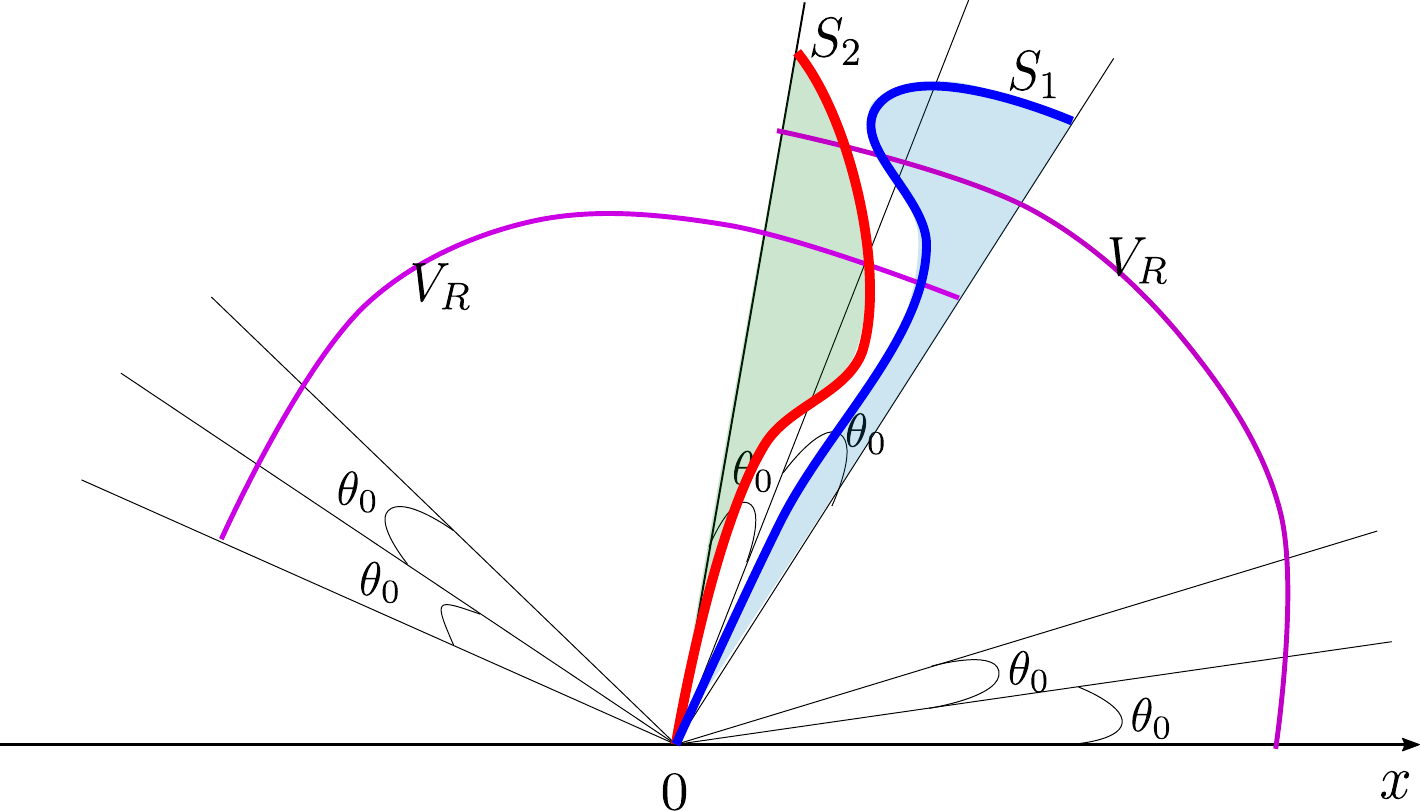}
	\caption{Two pieces $S_1$ and $S_2 $ }
	\label{Figure9}
\end{figure} 
Then the rest of the estimation goes through as when $m=1$. Thus, we are done.

\begin{corD}\label{co2}
Let $\Omega_{i,j}$ be the connected components of $\mathcal{A}_j$. Let $X\subset\mathcal{A}_j$ be the set of all $z_0\in\mathcal{A}_j$ so that $d_{\Omega_{i, j}}(z_0, \tilde{q})\geq C$ for any $\tilde{q}$ in the same connected component $\Omega_{i,j}$ as $z_0$. If $z\in X,$ then any point $w\in f^{-1}(z)$ is in $ X.$ Hence $X$ is dense in the boundary of $\mathcal{A}_j$.
\end{corD}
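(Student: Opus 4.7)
The plan is to prove Corollary D in two steps: first establish the backward invariance $f^{-1}(z)\cap\mathcal{A}_j\subset X$ for every $z\in X$, and then combine this with Theorem C and the classical density of backward orbits in the Julia set to conclude that $X$ is dense in $\partial\mathcal{A}_j$.

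For the first step, I would fix $z\in X\cap\Omega_{i,j}$ and $w\in f^{-1}(z)$. Because the Julia set is totally invariant and the forward orbit of $w$ agrees with that of $z$ after one iterate, $w$ lies in the Fatou set and converges to $0$ from the direction ${\bf v_j}$, so $w\in\Omega_{i',j}$ for some component $\Omega_{i',j}\subset\mathcal{A}_j$ with $f(\Omega_{i',j})\subset\Omega_{i,j}$. I would then record the forward invariance $f(Q)\subset Q$: if $\tilde{w}\in f^{-l}(f^k(q))$ with $l\ge 1$ then $f(\tilde{w})\in f^{-(l-1)}(f^k(q))\subset Q$, while if $l=0$ then $\tilde{w}=f^k(q)$ and $f(\tilde{w})=f^{k+1}(q)\in Q$. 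For any $\tilde{w}\in Q\cap\Omega_{i',j}$ this yields $f(\tilde{w})\in Q\cap\Omega_{i,j}$, and Proposition \ref{pro2} applied to $f:\Omega_{i',j}\to\Omega_{i,j}$ gives
$$d_{\Omega_{i',j}}(w,\tilde{w})\;\ge\; d_{\Omega_{i,j}}(f(w),f(\tilde{w}))\;=\;d_{\Omega_{i,j}}(z,f(\tilde{w}))\;\ge\; C,$$
the last inequality using $z\in X$. Hence $w\in X$.

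For the second step, Theorem C supplies a point $z_0\in\Omega_j\cap X$. Iterating the first step gives $\bigcup_{n\ge 0}f^{-n}(z_0)\subset X$. Since $f$ is a polynomial, the only possible exceptional point other than $\infty$ would be the center of a superattracting fixed point, which $z_0$ is not as it lies in the parabolic basin $\mathcal{A}_j$. The classical density result of Brolin \cite{RefBrolin} therefore shows that $\bigcup_n f^{-n}(z_0)$ is dense in the Julia set $J$, and since $\partial\mathcal{A}_j\subset J$, the density of $X$ in $\partial\mathcal{A}_j$ follows.

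The one place that requires real care is the component bookkeeping: to apply the distance-decreasing property I must remain inside a single Fatou component at each step, so the essential checks are that $f(\Omega_{i',j})\subset\Omega_{i,j}$ and that the image $f(\tilde{w})$ stays in $Q\cap\Omega_{i,j}$ rather than escaping to another component. Once these points are verified, the rest of the argument is formal.
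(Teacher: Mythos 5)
Your proof is correct and follows essentially the same strategy as the paper: apply the distance-decreasing property of the Kobayashi metric (Proposition \ref{pro2}) to show $X$ is backward-invariant, then invoke Theorem C for nonemptiness and the classical density of backward orbits in the Julia set to conclude. You are somewhat more careful than the paper's terse version, explicitly tracking the component $\Omega_{i',j}$ containing $w$ versus $\Omega_{i,j}$ containing $z=f(w)$ and verifying the forward invariance $f(Q)\subset Q$, both of which the paper leaves implicit.
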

\begin{proof}
	By Theorem C, we know that $X\neq  \emptyset.$ Suppose $w\notin X,$ then there exists $ \tilde{q}\in Q$ so that $d_{\Omega_{i, j}}(w, \tilde{q})< C$. Since $f$ is distance decreasing, see Proposition \ref{pro2}, we have $d_{\Omega_{i, j}}(f(w), f(\tilde{q}))=d_{\Omega_{i,j}}(z, f(\tilde{q}))< C.$ This contradicts $z\in X.$

	Furthermore, we know that $\{f^{-n}(z)\}$ clusters at every point in Julia set. In particular, this is true if $z\in X.$ More precisely, $\{f^{-n}(z)\}$ equidistributes toward the Green measure. Therefore, $X$ is dense in the boundary of $\mathcal{A}_j$.
\end{proof}

\end{document}